\pdfoutput=1
\documentclass[11pt]{article}

\usepackage{amsmath,amssymb,amsthm,mathtools}
\usepackage{geometry}
\usepackage{graphicx}
\usepackage{subcaption}
\usepackage{booktabs}
\usepackage{array}
\usepackage{float}
\usepackage{hyperref}
\theoremstyle{plain}
\numberwithin{equation}{section}

\newtheorem{example}{Example}[section]
\newtheorem{theorem}{Theorem}[section]
\newtheorem{lemma}[theorem]{Lemma}
\newtheorem{proposition}[theorem]{Proposition}
\newtheorem{corollary}[theorem]{Corollary}

\theoremstyle{definition}
\newtheorem{definition}[theorem]{Definition}

\theoremstyle{remark}
\newtheorem{remark}[theorem]{Remark}

\providecommand{\subjclass}[2][]{\par\noindent\textbf{2020 Mathematics Subject Classification.} #2}
\providecommand{\keywords}[1]{\par\noindent\textbf{Keywords.} #1}

\title{Preimage Regions of Symmetric Separable Maps on the Simplex:
	Convexity and Barycentric Star-Shapedness
	\thanks{Corresponding author. Email: xcui@nju.edu.cn\\
		Xiaojun Cui and Jilong Xu are supported by the National Natural Science Foundation
		of China (Grant No. 12171234), the Project Funded by the Priority Academic Program
		Development of Jiangsu Higher Education Institutions (PAPD) and the Fundamental
		Research Funds for the Central Universities.}}
\author{Jilong Xu$^a$, Xiaojun Cui$^{a,*}$\\
	\small\em $^a$ School of Mathematics, Nanjing University, Nanjing {\rm 210093}, P. R. China}
\date{}

\begin{document}
\maketitle

\begin{abstract}
We study preimage regions on the open probability simplex associated with
symmetric separable functionally generated maps.  The problem is a
finite-dimensional geometric question about convexity and barycentric
star-shapedness of these regions.  In the portfolio interpretation, the
regions consist of the points whose generated portfolio has no negative
coordinate.

For symmetric separable generators, the defining first-order inequalities
split into a coordinate term and a symmetric aggregation term.  This
coordinate--aggregation decomposition is the main organizing device of the
paper.  We show that the aggregation term may destroy convexity, and may even
destroy barycentric star-shapedness.  In particular, moving closer to the
barycenter need not preserve the long-only property.

We then give a necessary and sufficient threshold criterion for barycentric
star-shapedness and derive sufficient conditions that recover it.  These
conditions are expressed in terms of concavity and second-derivative
domination for the aggregation function.  The entropy case is the affine
aggregation case, in which the long-only constraints reduce to coordinate
thresholds.
\end{abstract}

\subjclass[2020]{Primary 52A30; Secondary 26B25, 91G10}
\keywords{simplex; symmetric separable generators; preimage regions; convexity; barycentric star-shapedness; Jensen differences; functionally generated portfolios}

\section{Introduction}
\label{sec:introduction}
This paper studies a concrete class of regions on the probability simplex.
They are the preimages of the nonnegative-coordinate constraint under
symmetric separable maps arising from functionally generated portfolios.  In
portfolio terminology, they consist of the market weights for which the
generated portfolio has no negative coordinate.  The problem studied here is
their convexity and their star-shapedness with respect to the barycenter.

The financial motivation comes from Stochastic Portfolio Theory.  A
functionally generated portfolio assigns portfolio weights to market weights
through a gradient-type formula associated with a generating function, starting
with Fernholz's portfolio generating functions
\cite{Fernholz1999PortfolioGeneratingFunctions,Fernholz2002SPT}.  In discrete
time, Wong, partly in joint work with Pal, developed a finite-dimensional
framework in which generated portfolios are described by concave functions and
first-order inequalities on the simplex
\cite{Wong2015OptimizationofRelativeArbitrage,
	PalWong2016GeometryRelativeArbitrage,Wong2019InfromationGeometryinPortfolio}.
In the standard long-only setting, a positive concave generating function on
the whole simplex produces a portfolio with nonnegative coordinates; see
\cite[Proposition 5]{PalWong2016GeometryRelativeArbitrage}.

In this paper the generating function is considered on its positive domain
\(D_\Phi=\{\mu\in\Delta^{(n)}:\Phi(\mu)>0\}\), which may be a proper subset
of the simplex.  On this domain the generated portfolio is well defined, but
some of its coordinates may be negative.  This restricted-domain viewpoint is
related to the admissible long-short framework of
\cite{Xu2026GeometryofAdmissibleShortSelling}, although no result from that
work is used here.  In the present paper we restrict attention to the symmetric
separable class and study whether the resulting long-only preimage is convex
and whether the whole segment from the barycenter to any long-only point
remains long-only.

This center-directed property is star-shapedness with respect to the
barycenter \cite{HansenHerburtMartiniMoszynska2020StarshapedSets}.  It is
natural here because the generators are symmetric and the barycenter is the
distinguished point of the simplex.  In the portfolio motivation, it
corresponds to the heuristic that moving a market weight toward the
equal-weight point should reduce the risk of producing a negative coordinate.

We show that this heuristic is not generally valid.  Even for symmetric
separable concave generators, the long-only preimage region may be nonconvex and may
fail to be star-shaped with respect to the barycenter.  The reason is visible
from the symmetric separable normal form of the constraints.  Each inequality splits
into a coordinate term and a symmetric aggregation term.  Along a segment, the
coordinate term relevant to a constraint may stay fixed while the aggregation
term changes.  A midpoint Jensen difference of the aggregation function can
then make a midpoint fail the long-only condition even when the endpoints satisfy it.

The same decomposition also explains when the barycentric behavior can be
recovered.  Concavity of the aggregation function makes barycentric movement
improve the aggregation term.  More generally, a second-derivative domination
condition allows some unfavorable behavior of the aggregation term to be
offset by the coordinate term.  The entropy case is the affine aggregation
case: the aggregation term is constant on the simplex and the long-only
constraints reduce to coordinate thresholds.

Although the terminology comes from portfolio generation, the results below
are finite-dimensional and deterministic.  They do not prove relative
arbitrage, outperformance, or an optimization theorem.  The contribution is a
geometric analysis of a family of long-only preimage regions on the simplex.
The symmetric separable setting is used as a benchmark class because it gives
a complete coordinate--aggregation decomposition while still exhibiting
nonconvexity, failure of barycentric star-shapedness, and recovery cases.

\subsection*{Main results}

The main results follow the preceding decomposition.  First, Theorem
\ref{thm:local-jensen-nonconvexity} gives a midpoint Jensen-difference
mechanism for nonconvexity.  For suitable vertical shifts, two long-only points
can have a midpoint that is not long-only, even though that midpoint is closer to the
barycenter.

Second, Theorem \ref{thm:sharp-barycentric-threshold} gives a necessary and sufficient criterion
for barycentric star-shapedness of all shifted regions \(\mathcal L_T\).  The
criterion is expressed through the minimal constraint function
\(\Theta_\psi\), and a symmetric separable example shows that the criterion can
fail.

Third, Theorems \ref{thm:star-shaped-concave-eta} and
\ref{thm:curvature-domination-barycentric} give recovery conditions.  Concavity
of the aggregation function is sufficient, and a quantitative
second-derivative domination condition gives a more flexible criterion.
Corollary \ref{cor:nonconvex-but-star-shaped} shows that convexity and
barycentric star-shapedness are distinct properties.

The entropy case is treated as a boundary example: affine aggregation makes the
constraints collapse to coordinate thresholds.

\subsection*{Organization of the paper}

Section \ref{sec:general-setting} gives the simplex setup and the symmetric
separable normal form.  Section \ref{sec:intuition} proves the midpoint
mechanism and records quadratic and power-family examples.  Section
\ref{sec:star-shaped} gives the barycentric threshold criterion and a failure
example.  Section \ref{sec:recovery-of-barycentric-stability} proves the
recovery results.  Section \ref{sec:entropy} treats the affine entropy case,
and Section \ref{sec:conclusion-open-problems} concludes.  Auxiliary proofs
are placed in the appendices.

\section{General setting}\label{sec:general-setting}
This section fixes the deterministic simplex framework used throughout the
paper.  We first define the long-only preimage region induced by the nonnegative
simplex constraint, and then specialize to the symmetric separable class.  In
that class we introduce the threshold parameter, the aggregation function, and
the minimal constraint function that will organize the rest of the paper.
\subsection{Long-only preimage region}

Let \(n\ge 2\), and consider the open probability simplex
\[
\Delta^{(n)}=
\Bigl\{\mu=(\mu_1,\dots,\mu_n)\in\mathbb{R}^n:
\mu_i>0,\ \sum_{i=1}^n \mu_i=1\Bigr\}.
\]
In the portfolio interpretation, \(\mu\) is a vector of market weights.  For
the geometric analysis below, it is simply a point of the simplex.

Let \(\Phi\) be a differentiable concave function on an open convex subset of
\(\Delta^{(n)}\).  We write
\[
D_\Phi:=\{\mu\in\Delta^{(n)}:\Phi(\mu)>0\}
\]
for its positive region, and assume that \(D_\Phi\) is nonempty.  We use the following gradient convention.  Extend \(\Phi\) smoothly to a
neighborhood in \(\mathbb R^n\), take the usual Euclidean gradient of this
extension, and project it onto the tangent space
\[
\Bigl\{v\in\mathbb R^n:\sum_{i=1}^n v_i=0\Bigr\}.
\]
This projected vector is denoted by \(\nabla\Phi(\mu)\).  Since
\(e_i-\mu\) lies in the same tangent space, this is the gradient relevant for
the first-order formula below.  On \(D_\Phi\), the regularized-gradient
formula defines weights
\begin{equation}\label{eq:gradient-formula}
	\pi_i(\mu)
	=
	\mu_i\left(
	1+\frac{\langle\nabla\Phi(\mu),e_i-\mu\rangle}{\Phi(\mu)}
	\right),
	\qquad i=1,\dots,n,
\end{equation}
where \(e_i\) is the \(i\)-th vertex of the simplex.  Since
\(\sum_{i=1}^n\mu_i(e_i-\mu)=0\), the vector \(\pi(\mu)\) has total mass one.

The map \(\mu\mapsto\pi(\mu)\) is the generated portfolio associated with
\(\Phi\).  In the long-short framework of
\cite{Xu2026GeometryofAdmissibleShortSelling}, whose preprint is publicly
available, the positive region of the generating function is allowed to be a
proper subset of the simplex.  Then
\eqref{eq:gradient-formula} still defines a unit-mass generated portfolio on
\(D_\Phi\), but its coordinates need not be nonnegative.  The long-only region
is precisely the preimage of the nonnegative simplex under this map.  Since we do not restrict attention to generators whose portfolios are
long-only on all of \(D_\Phi\), we refer to such generated portfolios as
long-short in the present paper.

The analysis below is finite-dimensional and deterministic.  It is motivated
by discrete-time stochastic portfolio theory, but it does not require a
continuous-time stochastic-calculus framework.  We study the geometry of the
first-order inequalities induced by \eqref{eq:gradient-formula}.  For
\(\mu\in D_\Phi\), define
\[
G_i(\mu):=\Phi(\mu)+\langle \nabla\Phi(\mu),e_i-\mu\rangle,
\qquad i=1,\dots,n.
\]
Since \(\mu_i>0\) and \(\Phi(\mu)>0\), the long-only condition is equivalent
to
\[
\pi_i(\mu)\ge0
\iff
1+\frac{\langle\nabla\Phi(\mu),e_i-\mu\rangle}{\Phi(\mu)}\ge0
\iff
G_i(\mu)\ge0.
\]
Thus the long-only region with respect to \(\Phi\) is
\begin{equation}\label{eq:long-only-region}
	\mathcal L
	=
	\{\mu\in\Delta^{(n)}:\Phi(\mu)>0,\ G_i(\mu)\ge0,\ i=1,\dots,n\}.
\end{equation}
Here and below, \(\Phi(\mu)>0\) is understood only at points where
\(\Phi\) is defined.  When the word feasible is used, it always means
long-only feasible in this sense: the generated portfolio has no negative
coordinate.

Thus \(\mathcal L\) is the preimage of the nonnegative simplex under the
generated portfolio map; equivalently, it is the set of market weights for
which the generated portfolio has no negative coordinate.  In the symmetric
separable setting considered below, the barycenter is the distinguished center.
We study whether this preimage is convex, meaning that the long-only property
is preserved under interpolation, and whether it is star-shaped with respect
to the barycenter, meaning that movement from a long-only point toward
\(\bar e\) remains long-only.

\subsection{Symmetric separable generators and the normal form}
\label{sec:sym-separable-normal-form}

We now restrict attention to a symmetric separable class.  These generators
are invariant under coordinate permutations, so the barycenter
\(\bar e=(1/n,\dots,1/n)\) is the natural reference point.

\begin{definition}[Symmetric separable generator]
	Let \(B\in\mathbb R\) and let \(\psi\in C^2(0,1)\) be concave.  A function
	of the form
	\[
	\Phi_B(\mu)=B+\sum_{i=1}^n\psi(\mu_i)
	\]
	is called a \emph{symmetric separable generator} whenever its positive
	region
	\begin{equation}\label{eq:positive-region-PhiB}
		D_{\Phi_B}:=\{\mu\in\Delta^{(n)}:\Phi_B(\mu)>0\}
	\end{equation}
	is nonempty.
\end{definition}

The following identity is the basic normal form of the long-only constraints.

\begin{lemma}[Normal form of the long-only constraints]
	\label{lem:normal-form}
	For a symmetric separable generator, define the aggregation function
	\begin{equation}\label{eq:eta-definition}
		\eta_\psi(t):=\psi(t)-t\psi'(t).
	\end{equation}
	Then, for \(\mu\in D_{\Phi_B}\),
	\begin{equation}\label{eq:Gi-normal-form}
		G_i(\mu)=B+\psi'(\mu_i)+\sum_{j=1}^n\eta_\psi(\mu_j),
		\qquad i=1,\dots,n.
	\end{equation}
\end{lemma}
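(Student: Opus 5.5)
The proof is a direct computation from the definition of \(G_i\) using the gradient convention of Section~\ref{sec:general-setting}. The key observation is that only inner products \(\langle\nabla\Phi_B(\mu),e_i-\mu\rangle\) appear, and \(e_i-\mu\) lies in the tangent space \(\{v:\sum_k v_k=0\}\). We may therefore replace the projected gradient by the Euclidean gradient of the natural extension
\[
\tilde\Phi_B(x)=B+\sum_{k=1}^n\psi(x_k),
\]
defined for \(x\) in a neighborhood of \(\mu\) with \(x_k\in(0,1)\). Projection onto the tangent space only subtracts a multiple of \((1,\dots,1)\), which is orthogonal to \(e_i-\mu\), so the inner product is unchanged. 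In particular the answer does not depend on the chosen smooth extension. Any other extension differs from \(\tilde\Phi_B\) by a function vanishing on the affine hull of the simplex, and its gradient there is a multiple of \((1,\dots,1)\).

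With this choice, the Euclidean gradient is \(\partial_k\tilde\Phi_B(\mu)=\psi'(\mu_k)\). Hence
\[
\langle\nabla\Phi_B(\mu),e_i-\mu\rangle=\psi'(\mu_i)-\sum_{j=1}^n\mu_j\psi'(\mu_j).
\]
Adding \(\Phi_B(\mu)=B+\sum_j\psi(\mu_j)\) gives
\[
G_i(\mu)=B+\psi'(\mu_i)+\sum_{j=1}^n\bigl(\psi(\mu_j)-\mu_j\psi'(\mu_j)\bigr),
\]
which is \eqref{eq:Gi-normal-form} with \(\eta_\psi\) as in \eqref{eq:eta-definition}.

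The only step requiring care is the justification that the projected gradient may be replaced by the unprojected gradient of \(\tilde\Phi_B\). This is the orthogonality argument above. Everything else is bookkeeping. The hypothesis \(\mu\in D_{\Phi_B}\) is not needed for the identity itself; it only guarantees that \(G_i\) is the quantity relevant to the long-only condition, since there \(\Phi_B(\mu)>0\).
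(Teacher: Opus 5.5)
Your proof is correct and takes the same route as the paper: substitute \(\partial_i\Phi_B(\mu)=\psi'(\mu_i)\) into \(G_i(\mu)=\Phi_B(\mu)+\partial_i\Phi_B(\mu)-\sum_j\mu_j\partial_j\Phi_B(\mu)\). Your orthogonality argument makes explicit a step the paper leaves implicit, namely that neither the tangent projection nor the choice of smooth extension changes \(\langle\nabla\Phi_B(\mu),e_i-\mu\rangle\).
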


\begin{proof}
	Substituting \(\partial_i\Phi_B(\mu)=\psi'(\mu_i)\) into
	\(G_i(\mu)=\Phi_B(\mu)+\partial_i\Phi_B(\mu)-\sum_j\mu_j\partial_j\Phi_B(\mu)\)
	gives \eqref{eq:Gi-normal-form}.
\end{proof}

\begin{remark}[Coordinate term and aggregation term]
	Formula \eqref{eq:Gi-normal-form} separates the \(i\)-th constraint into a
	coordinate term \(\psi'(\mu_i)\) and a symmetric aggregation term
	\(\sum_{j=1}^n\eta_\psi(\mu_j)\).
	
	The coordinate term detects the size of the \(i\)-th component, whereas the
	aggregation term can change along line segments even when the relevant
	coordinate is fixed.  This is the mechanism behind the Jensen-difference
	nonconvexity results and the barycentric inequalities developed below.
\end{remark}

\subsection{Threshold regions and the minimal constraint}
\label{subsec:threshold-minimal-constraint}

For shifted separable generators it is convenient to use a threshold
parameter rather than the vertical shift itself.  We write the unshifted
separable sum as \(S\) and use \(T\) as the primary parameter; the vertical
shift notation \(B=-T\) is mentioned only when needed to connect with
\(\Phi_B\).  This convention reduces the long-only problem to a family of
threshold regions with a common separable shape.

\begin{definition}[Unshifted constraints and threshold regions]
	\label{def:unshifted-threshold-regions}
	For a concave function \(\psi\in C^2(0,1)\), define
	\[
	S(\mu):=\sum_{j=1}^n\psi(\mu_j),
	\qquad
	\eta_\psi(t):=\psi(t)-t\psi'(t).
	\]
	The corresponding unshifted first-order constraints are
	\[
	S_i(\mu):=
	S(\mu)+\langle\nabla S(\mu),e_i-\mu\rangle
	=
	\psi'(\mu_i)+\sum_{j=1}^n\eta_\psi(\mu_j),
	\qquad i=1,\dots,n.
	\]
	For \(T\in\mathbb R\), define the threshold long-only region
	\begin{equation}\label{eq:threshold-long-only-region}
		\mathcal L_T
		:=
		\{\mu\in\Delta^{(n)}:S(\mu)>T,\ S_i(\mu)\ge T,\ i=1,\dots,n\}.
	\end{equation}
	Equivalently, \(\mathcal L_T\) is the long-only region of the shifted
	generator \(\Phi_{-T}(\mu)=S(\mu)-T\), whose positive region is
	\[
	D_{\Phi_{-T}}=\{\mu\in\Delta^{(n)}:S(\mu)>T\}.
	\]
\end{definition}

Throughout the rest of the paper, for \(\mu\in\Delta^{(n)}\), we write
\(\mu_{(1)}:=\max_{1\le i\le n}\mu_i\), using the standard rank notation in
stochastic portfolio theory; see \cite{Fernholz2002SPT}.

\begin{lemma}[The largest coordinate determines the minimal constraint]
	\label{lem:largest-coordinate-active}
	Let \(\psi\) be concave, and let \(S_i\) be the unshifted constraints from
	Definition \ref{def:unshifted-threshold-regions}.  Then
	\[
	\min_{1\le i\le n}S_i(\mu)
	=
	\sum_{j=1}^n\eta_\psi(\mu_j)+\psi'(\mu_{(1)}).
	\]
\end{lemma}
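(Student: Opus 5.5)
The plan is to separate the two terms in the normal form of Definition~\ref{def:unshifted-threshold-regions}. For every \(i\), the unshifted constraint is
\[
S_i(\mu)=\sum_{j=1}^n\eta_\psi(\mu_j)+\psi'(\mu_i).
\]
The aggregation term \(\sum_{j}\eta_\psi(\mu_j)\) does not depend on the index \(i\). It is therefore a common additive constant when we minimize over \(i\), and
\[
\min_{1\le i\le n}S_i(\mu)=\sum_{j=1}^n\eta_\psi(\mu_j)+\min_{1\le i\le n}\psi'(\mu_i).
\]
The lemma thus reduces to the one-variable identity \(\min_i\psi'(\mu_i)=\psi'(\mu_{(1)})\).

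This identity follows from concavity. Since \(\psi\in C^2(0,1)\) is concave, \(\psi''\le 0\) on \((0,1)\), so \(\psi'\) is nonincreasing there. Every \(\mu\in\Delta^{(n)}\) has coordinates in \((0,1)\). For each \(i\) we have \(\mu_i\le\mu_{(1)}\), hence \(\psi'(\mu_i)\ge\psi'(\mu_{(1)})\). The value \(\psi'(\mu_{(1)})\) is attained at any index \(i^*\) with \(\mu_{i^*}=\mu_{(1)}\). Therefore the minimum of \(\psi'(\mu_i)\) over \(i\) equals \(\psi'(\mu_{(1)})\). Substituting this into the displayed reduction gives the claim.

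There is no real obstacle here. The only point needing care is monotonicity of \(\psi'\), which uses concavity (not strict concavity) together with differentiability on the open interval containing all coordinates. Ties among the largest coordinates are harmless, since the identity concerns only the minimal value and not the minimizing index.
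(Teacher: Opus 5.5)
Your proposal is correct and follows the paper's own proof: the aggregation term \(\sum_j\eta_\psi(\mu_j)\) is independent of \(i\), and since concavity makes \(\psi'\) nonincreasing, the minimum of \(\psi'(\mu_i)\) over \(i\) is attained at a largest coordinate. Your extra remarks about ties and about needing only monotonicity of \(\psi'\), not strict concavity, are accurate and harmless.
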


\begin{proof}
	By the normal form, only the term \(\psi'(\mu_i)\) depends on \(i\).  Since
	\(\psi\) is concave, \(\psi'\) is nonincreasing, so the minimum is attained
	at a largest coordinate.
\end{proof}

\begin{definition}[Minimal constraint function]
	\label{def:minimal-constraint-function}
	For a symmetric separable generator, define
	\begin{equation}\label{eq:theta-definition}
		\Theta_\psi(\mu)
		:=
		\min_{1\le i\le n}S_i(\mu)
		=
		\sum_{j=1}^n\eta_\psi(\mu_j)+\psi'(\mu_{(1)}).
	\end{equation}
\end{definition}

Thus the threshold long-only region from
Definition \ref{def:unshifted-threshold-regions} can be written in terms of
the minimal constraint function as
\begin{equation}\label{eq:LT-active-form}
	\mathcal L_T
	=
	\{\mu\in\Delta^{(n)}:S(\mu)>T,\ \Theta_\psi(\mu)\ge T\}.
\end{equation}
Equivalently, \(\mathcal L_T\) is the long-only region of the shifted
generator \(\Phi_{-T}(\mu)=S(\mu)-T\) on its positive region.

\section{The midpoint mechanism and failure of convexity}
\label{sec:intuition}
This section develops the near-barycenter mechanism by which convexity can fail.  The
key point is that, along a chord, the coordinate term in a long-only
constraint may remain fixed while the aggregation term changes by a midpoint
Jensen difference.  This produces explicit threshold intervals for which the
long-only region is nonconvex.
\subsection{Nonconvexity near the barycenter}

In the symmetric setting, one might expect short selling to occur only far
from the barycenter.  The normal form shows why this intuition can fail:
along a segment, a coordinate term may remain fixed while the aggregation
term \(\sum_j\eta_\psi(\mu_j)\) changes.  The relevant variation is measured
by a midpoint Jensen difference of the aggregation function \(\eta_\psi\).

Fix \(n\ge3\), put \(b=1/n\), and for \(0<\varepsilon<b/2\) set
\(a_\varepsilon=b+\varepsilon\), \(c_\varepsilon=b-2\varepsilon\), and
\(d_\varepsilon=(a_\varepsilon+c_\varepsilon)/2=b-\varepsilon/2\).  We use
the three points
\begin{equation}\label{eq:pe-qe-me}
	\begin{aligned}
		p_\varepsilon&=(a_\varepsilon,a_\varepsilon,c_\varepsilon,b,\ldots,b),\\
		q_\varepsilon&=(c_\varepsilon,a_\varepsilon,a_\varepsilon,b,\ldots,b),\\
		m_\varepsilon&=\frac{p_\varepsilon+q_\varepsilon}{2}
		=(d_\varepsilon,a_\varepsilon,d_\varepsilon,b,\ldots,b),
	\end{aligned}
\end{equation}
where the trailing coordinates are omitted when \(n=3\).

\begin{lemma}[The midpoint is closer to the barycenter]
	\label{lem:midpoint-closer}
	For the points in \eqref{eq:pe-qe-me},
	\[
	\|p_\varepsilon-\bar e\|^2=\|q_\varepsilon-\bar e\|^2=6\varepsilon^2,
	\qquad
	\|m_\varepsilon-\bar e\|^2=\frac32\varepsilon^2.
	\]
	Thus \(m_\varepsilon\) is strictly closer to \(\bar e\) than both endpoints.
\end{lemma}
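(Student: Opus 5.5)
The claim is a direct computation, so I will subtract the barycenter \(\bar e=(b,\dots,b)\) coordinatewise from each of the three points in \eqref{eq:pe-qe-me} and sum the squares of the resulting deviation vectors. Only the first three coordinates can be nonzero, because every trailing coordinate equals \(b\); the case \(n=3\) therefore needs no separate treatment.

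For \(p_\varepsilon\), the deviations are \(a_\varepsilon-b=\varepsilon\) (twice) and \(c_\varepsilon-b=-2\varepsilon\). This gives
\[
\|p_\varepsilon-\bar e\|^2=\varepsilon^2+\varepsilon^2+4\varepsilon^2=6\varepsilon^2 .
\]
The point \(q_\varepsilon\) is obtained from \(p_\varepsilon\) by swapping the first and third coordinates. Euclidean distance to the permutation-invariant point \(\bar e\) is unchanged by such a swap, so \(\|q_\varepsilon-\bar e\|^2=6\varepsilon^2\) as well.

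For \(m_\varepsilon\), the deviations are \(d_\varepsilon-b=-\varepsilon/2\) (twice) and \(a_\varepsilon-b=\varepsilon\). This gives
\[
\|m_\varepsilon-\bar e\|^2=\tfrac14\varepsilon^2+\tfrac14\varepsilon^2+\varepsilon^2=\tfrac32\varepsilon^2 .
\]
Since \(\varepsilon>0\), we have \(\tfrac32\varepsilon^2<6\varepsilon^2\), which is the strict comparison asserted in the statement.

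Before computing, I would record that the points lie in \(\Delta^{(n)}\). Their coordinates sum to \(1\) because \(2a_\varepsilon+c_\varepsilon=3b\). They are positive because \(\varepsilon<b/2\) gives \(c_\varepsilon>0\). I expect no step here to be an obstacle; the only care needed is matching the coordinate pattern of \(m_\varepsilon\) correctly.
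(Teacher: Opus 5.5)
Your proposal is correct and follows essentially the same route as the paper, which simply lists the deviation vectors \((\varepsilon,\varepsilon,-2\varepsilon)\), \((-2\varepsilon,\varepsilon,\varepsilon)\), \((-\varepsilon/2,\varepsilon,-\varepsilon/2)\) and sums squares. Your only additions, which are harmless, are using permutation invariance for \(q_\varepsilon\) instead of computing it directly and checking that the points lie in \(\Delta^{(n)}\).
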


\begin{proof}
	Only the first three coordinates differ from \(b\).  Their deviations are
	\((\varepsilon,\varepsilon,-2\varepsilon)\),
	\((-2\varepsilon,\varepsilon,\varepsilon)\), and
	\((-\varepsilon/2,\varepsilon,-\varepsilon/2)\), respectively.  Squaring and
	summing gives the claim.
\end{proof}

\begin{definition}[Midpoint Jensen difference]
	For \(f\in C^2(I)\) and \(x-h,x+h\in I\), define
	\[
	J_f(x,h):=f(x+h)+f(x-h)-2f(x).
	\]
	For convex \(f\), this is the usual nonnegative midpoint Jensen gap; here we
	use the signed form.  The terminology is consistent with the literature on
	Jensen differences and Jensen-type functionals, especially in connection
	with convexity and entropy inequalities
	\cite{BurbeaRao1982HigherOrderJensen,SahooWong1988GeneralizedJensenDifference}.
\end{definition}

\begin{lemma}[Jensen-difference identity in arbitrary dimension]
	\label{lem:jensen-difference-general}
	For any symmetric separable generator,
	\begin{equation}\label{eq:jensen-difference-general}
		G_2(p_\varepsilon)-G_2(m_\varepsilon)
		=
		\eta_\psi(a_\varepsilon)+\eta_\psi(c_\varepsilon)
		-2\eta_\psi(d_\varepsilon)
		=
		J_{\eta_\psi}\left(d_\varepsilon,\frac{3\varepsilon}{2}\right),
	\end{equation}
	and the same identity holds with \(p_\varepsilon\) replaced by
	\(q_\varepsilon\).
\end{lemma}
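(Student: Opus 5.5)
The plan is to apply the normal form of Lemma \ref{lem:normal-form} directly at the three points and subtract. By \eqref{eq:Gi-normal-form}, for any $\mu\in D_{\Phi_B}$ we have $G_2(\mu)=B+\psi'(\mu_2)+\sum_{j}\eta_\psi(\mu_j)$. This formula is purely algebraic in $\mu$, so both sides of the claimed identity make sense as expressions in $\mu$. The difference is formal whenever both points lie in the domain of $\psi$; they do, because $0<\varepsilon<b/2$ gives $c_\varepsilon=b-2\varepsilon>0$, and all the other coordinates lie in $(0,1)$.

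The key observation is that the second coordinate equals $a_\varepsilon$ at $p_\varepsilon$, at $q_\varepsilon$ and at $m_\varepsilon$. Hence the coordinate term $\psi'(\mu_2)$ cancels, and so does the shift $B$. In the aggregation sums, the coordinates $4,\dots,n$ all equal $b$ at each point, so those terms also cancel. At $p_\varepsilon$ the first three coordinates are $(a_\varepsilon,a_\varepsilon,c_\varepsilon)$, and at $m_\varepsilon$ they are $(d_\varepsilon,a_\varepsilon,d_\varepsilon)$. The second coordinates cancel once more, which leaves
\[
G_2(p_\varepsilon)-G_2(m_\varepsilon)=\eta_\psi(a_\varepsilon)+\eta_\psi(c_\varepsilon)-2\eta_\psi(d_\varepsilon).
\]
For $q_\varepsilon$ the first three coordinates are $(c_\varepsilon,a_\varepsilon,a_\varepsilon)$. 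This is the same multiset as for $p_\varepsilon$ (only positions $1$ and $3$ are swapped), and the second coordinate is unchanged, so the same identity follows.

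It remains to recognize the Jensen difference. We have $d_\varepsilon=b-\varepsilon/2$, and
\[
d_\varepsilon+\tfrac{3\varepsilon}{2}=b+\varepsilon=a_\varepsilon,
\qquad
d_\varepsilon-\tfrac{3\varepsilon}{2}=b-2\varepsilon=c_\varepsilon,
\]
so the expression is exactly $J_{\eta_\psi}(d_\varepsilon,3\varepsilon/2)$. The definition of $J$ requires $\eta_\psi\in C^2$ on an interval containing $c_\varepsilon$ and $a_\varepsilon$. Since $\psi\in C^2$, the function $\eta_\psi$ is only $C^1$ in general, but the definition of $J$ is a formula that needs only values of $\eta_\psi$, so this does not matter.

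There is no real obstacle here. The only points needing care are the bookkeeping that the second coordinate stays equal to $a_\varepsilon$ at all three points, and the domain remark that the normal form is applied at points of $D_{\Phi_B}$. Alternatively, one reads $G_2$ through the polynomial-in-$\psi$ expression $B+\psi'(\mu_2)+\sum_j\eta_\psi(\mu_j)$, which is defined on all of $\Delta^{(n)}$.
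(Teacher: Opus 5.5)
Your proof is correct and follows the paper's argument. You substitute the normal form, cancel $B$, $\psi'(\mu_2)$, $\eta_\psi(\mu_2)$ and the trailing $\eta_\psi(b)$ terms, then identify the Jensen difference via $d_\varepsilon\pm 3\varepsilon/2=a_\varepsilon,c_\varepsilon$; your side remarks on the domain and on $\eta_\psi$ being only $C^1$ when $\psi\in C^2$ are valid refinements that the paper leaves implicit.
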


\begin{proof}
	In the normal form \eqref{eq:Gi-normal-form}, the terms \(B\),
	\(\psi'(\mu_2)\), the contribution \(\eta_\psi(\mu_2)\), and all trailing
	\(\eta_\psi(b)\)-terms are the same for \(p_\varepsilon\) and
	\(m_\varepsilon\).  Only the first and third coordinates contribute to the
	difference.  The case of \(q_\varepsilon\) is identical by symmetry.
\end{proof}

\begin{lemma}[Endpoint constraints dominate the midpoint constraint]
	\label{lem:endpoint-constraints-dominate}
	Let \(\psi\in C^3(0,1)\) be concave, and assume
	\[
	\psi''(b)<0,
	\qquad
	\eta_\psi''(b)>0.
	\]
	Then, for all sufficiently small \(\varepsilon>0\),
	\begin{equation}\label{eq:endpoint-constraints-dominate}
		S_2(m_\varepsilon)<
		\min\{S_i(p_\varepsilon),S_i(q_\varepsilon):i=1,\dots,n\}.
	\end{equation}
\end{lemma}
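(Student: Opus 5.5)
We compare all constraints at the endpoints with $S_2(m_\varepsilon)$ using the normal form of Lemma~\ref{lem:normal-form} (with $B=0$). There the $i$-th constraint is $\psi'(\mu_i)+\sum_j\eta_\psi(\mu_j)$.

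\textbf{Reduction to largest coordinates.} By Lemma~\ref{lem:largest-coordinate-active}, the minimum over $i$ at $p_\varepsilon$ equals $\Theta_\psi(p_\varepsilon)$. This is attained at a largest coordinate, which for $\varepsilon>0$ is $a_\varepsilon$, for instance $i=2$. Hence
\[
\min_i S_i(p_\varepsilon)=S_2(p_\varepsilon),
\]
and likewise $\min_i S_i(q_\varepsilon)=S_2(q_\varepsilon)$. Moreover, $q_\varepsilon$ is a coordinate permutation of $p_\varepsilon$ that fixes coordinate $2$, so the symmetric aggregation term and $\psi'(\mu_2)$ coincide. Therefore $S_2(p_\varepsilon)=S_2(q_\varepsilon)$, and it suffices to prove the single inequality $S_2(m_\varepsilon)<S_2(p_\varepsilon)$.

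\textbf{Jensen difference.} By Lemma~\ref{lem:jensen-difference-general}, applied with $B=0$ so that $G_2=S_2$,
\[
S_2(p_\varepsilon)-S_2(m_\varepsilon)=J_{\eta_\psi}\bigl(d_\varepsilon,\tfrac{3\varepsilon}{2}\bigr).
\]
We therefore need $J_{\eta_\psi}(d_\varepsilon,3\varepsilon/2)>0$ for small $\varepsilon$. Since $\psi\in C^3$, we have $\eta_\psi\in C^2$ with $\eta_\psi''=-\psi''-t\psi'''$. Taylor's theorem with the mean-value (integral) form of the remainder gives
\[
J_{\eta_\psi}(x,h)=h^2\int_0^1 (1-s)\bigl[\eta_\psi''(x+sh)+\eta_\psi''(x-sh)\bigr]\,ds .
\]
This equals $h^2\eta_\psi''(\xi)$ for some $\xi\in[x-h,x+h]$ by the intermediate value theorem.

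\textbf{Continuity at $b$.} With $x=d_\varepsilon=b-\varepsilon/2$ and $h=3\varepsilon/2$, the point $\xi$ lies in $[b-2\varepsilon,\,b+\varepsilon]$. Since $\eta_\psi''$ is continuous with $\eta_\psi''(b)>0$, it stays positive on a neighborhood of $b$. For $\varepsilon$ small enough (and $\varepsilon<b/2$, so that all points lie in the open simplex), we get
\[
J_{\eta_\psi}\bigl(d_\varepsilon,\tfrac{3\varepsilon}{2}\bigr)=\tfrac94\varepsilon^2\,\eta_\psi''(\xi)>0 .
\]
This proves \eqref{eq:endpoint-constraints-dominate}.

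\textbf{Role of $\psi''(b)<0$.} This hypothesis does not seem to be needed for the inequality among $S_i$ itself. The identification of the minimum at $p_\varepsilon$ with the index $2$ uses only concavity, through Lemma~\ref{lem:largest-coordinate-active}. The main point to check is therefore the bookkeeping in the reduction step, namely that no other index gives a strictly smaller value at $p_\varepsilon$. This holds because $\psi'$ is nonincreasing and $a_\varepsilon\ge b\ge c_\varepsilon$. I expect $\psi''(b)<0$ to be used later, to place a threshold $T$ strictly between the two sides, or to guarantee strictness elsewhere. Here I would remark that it is not required.
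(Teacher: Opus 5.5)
Your proof is correct. Its core step is the same as the paper's: the identity $S_2(p_\varepsilon)-S_2(m_\varepsilon)=J_{\eta_\psi}(d_\varepsilon,3\varepsilon/2)$, together with positivity of $\eta_\psi''$ near $b$. The paper expands the Jensen difference as $\frac94\eta_\psi''(b)\varepsilon^2+o(\varepsilon^2)$, while you use the exact mean-value form $\frac94\varepsilon^2\eta_\psi''(\xi)$; these are equivalent here.

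The genuine difference lies in how the other endpoint constraints are handled. The paper compares each of them with $S_2(m_\varepsilon)$ by a first-order Taylor expansion. It obtains $S_3(p_\varepsilon)-S_2(m_\varepsilon)=-3\psi''(b)\varepsilon+O(\varepsilon^2)$ and, for $\ell\ge4$, $S_\ell(p_\varepsilon)-S_2(m_\varepsilon)=-\psi''(b)\varepsilon+O(\varepsilon^2)$. This is exactly where the paper uses $\psi''(b)<0$. You instead use the monotonicity of $\psi'$ (Lemma~\ref{lem:largest-coordinate-active}) to identify the minimum exactly as $S_2(p_\varepsilon)=S_2(q_\varepsilon)$. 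A single comparison then suffices, and no asymptotics are needed for the non-maximal indices.

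Your route is shorter and shows that $\psi''(b)<0$ plays no role in this lemma. The paper's expansions additionally record that the non-maximal constraints are slack by order $\varepsilon$, which dominates the $O(\varepsilon^2)$ Jensen gap.

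In support of your final remark, $\psi''(b)<0$ actually follows from the other hypotheses. Suppose $\psi''(b)=0$. Then $b$ is an interior maximum of $\psi''\le0$, so $\psi'''(b)=0$ and $\eta_\psi''(b)=-\psi''(b)-b\psi'''(b)=0$, contradicting $\eta_\psi''(b)>0$.
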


\begin{proof}
	See Appendix \ref{app:proof-of-endpoint-constraints-dominate-the-midpoint-constraint}.
\end{proof}

\begin{lemma}[The threshold interval is nonempty and has Jensen length]
	\label{lem:shift-interval}
	Under the assumptions of Lemma \ref{lem:endpoint-constraints-dominate},
	define
	\[
	C_\varepsilon:=
	\min\left\{
	S(p_\varepsilon),S(q_\varepsilon),S(m_\varepsilon),
	\min_iS_i(p_\varepsilon),
	\min_iS_i(q_\varepsilon)
	\right\}.
	\]
	Then, for all sufficiently small \(\varepsilon>0\),
	\[
	C_\varepsilon
	=
	S_2(p_\varepsilon)=S_1(p_\varepsilon)
	=
	S_2(q_\varepsilon)=S_3(q_\varepsilon),
	\]
	and the interval
	\[
	\mathcal I_\varepsilon:=(S_2(m_\varepsilon),C_\varepsilon)
	\]
	is nonempty.  Moreover,
	\[
	|\mathcal I_\varepsilon|
	=
	J_{\eta_\psi}\left(d_\varepsilon,\frac{3\varepsilon}{2}\right)
	=
	\frac94 \eta_\psi''(b)\varepsilon^2+o(\varepsilon^2).
	\]
\end{lemma}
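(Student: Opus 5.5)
We plan to evaluate each quantity in the definition of \(C_\varepsilon\) through the normal form of Definition \ref{def:unshifted-threshold-regions} and to compare them by Taylor expansion at \(b\).

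\textbf{Step 1: the minimal constraints at the endpoints.} Since \(a_\varepsilon>b>c_\varepsilon\), the largest coordinate of \(p_\varepsilon\) is \(a_\varepsilon\), attained at indices \(1,2\). By Lemma \ref{lem:largest-coordinate-active},
\[
\min_iS_i(p_\varepsilon)=S_1(p_\varepsilon)=S_2(p_\varepsilon)=\psi'(a_\varepsilon)+2\eta_\psi(a_\varepsilon)+\eta_\psi(c_\varepsilon)+(n-3)\eta_\psi(b).
\]
For \(q_\varepsilon\) the largest coordinate \(a_\varepsilon\) sits at indices \(2,3\), and \(q_\varepsilon\) is a permutation of \(p_\varepsilon\). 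Hence the aggregation sum is the same, and we get the same value with \(S_2(q_\varepsilon)=S_3(q_\varepsilon)\). This gives the chain of equalities, provided the minimum defining \(C_\varepsilon\) is attained at the constraint terms and not at the \(S\)-values.

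\textbf{Step 2: the constraints lie below the \(S\)-values.} We must show \(\min_iS_i(p_\varepsilon)<S(p_\varepsilon)=S(q_\varepsilon)\) and \(\min_iS_i(p_\varepsilon)<S(m_\varepsilon)\). We first use the identity
\[
S_i(\mu)-S(\mu)=\psi'(\mu_i)-\sum_j\mu_j\psi'(\mu_j).
\]
At \(p_\varepsilon\), with \(i\) an index of a largest coordinate, this equals the \(\psi'\)-value at the largest coordinate minus a \(\mu\)-weighted average of the \(\psi'\)-values. Since \(\psi'\) is nonincreasing this is \(\le0\). For the strict inequality we expand: \(\psi'(a_\varepsilon)-\sum_j\mu_j\psi'(\mu_j)=\psi''(b)\varepsilon+O(\varepsilon^2)\), because \(\sum_j(\mu_j-b)=0\) kills the first-order average. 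This is strictly negative by \(\psi''(b)<0\).

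For \(S(m_\varepsilon)\) we note \(S(m_\varepsilon)-S(p_\varepsilon)=O(\varepsilon^2)\), since \(S\) has zero tangential gradient at the symmetric point \(\bar e\) and both points are within \(O(\varepsilon)\) of it. The gap of order \(\varepsilon\) found above therefore dominates. So \(C_\varepsilon=S_2(p_\varepsilon)\) for small \(\varepsilon\). This is the step we expect to need the most care: it requires a first-order gap \(-\psi''(b)\varepsilon\) against second-order differences, and it is exactly where the hypothesis \(\psi''(b)<0\) is used.

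\textbf{Step 3: the length of the interval.} By Lemma \ref{lem:jensen-difference-general}, stated there with \(G_2\) but valid for \(S_2\) since the shift \(B\) cancels,
\[
C_\varepsilon-S_2(m_\varepsilon)=S_2(p_\varepsilon)-S_2(m_\varepsilon)=J_{\eta_\psi}(d_\varepsilon,3\varepsilon/2).
\]
Nonemptiness of \(\mathcal I_\varepsilon\) also follows from Lemma \ref{lem:endpoint-constraints-dominate}. For the asymptotics, \(\eta_\psi\in C^2\) because \(\psi\in C^3\). Taylor's theorem with the mean value form gives \(J_{\eta}(x,h)=h^2\eta''(\xi)\) for some \(\xi\in(x-h,x+h)\). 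With \(h=3\varepsilon/2\) and \(\xi\to b\), continuity of \(\eta_\psi''\) yields \(\frac94\eta_\psi''(b)\varepsilon^2+o(\varepsilon^2)\). This is positive for small \(\varepsilon\), since \(\eta_\psi''(b)>0\).
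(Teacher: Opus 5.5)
Your proposal is correct and follows essentially the same route as the paper's proof. You identify \(S_1(p_\varepsilon)=S_2(p_\varepsilon)=S_2(q_\varepsilon)=S_3(q_\varepsilon)\) as the smallest endpoint constraint, show it lies below \(S(p_\varepsilon)\), \(S(q_\varepsilon)\), \(S(m_\varepsilon)\) by an order-\(\varepsilon\) gap (from \(\psi''(b)<0\)) against \(O(\varepsilon^2)\) differences, and read off \(|\mathcal I_\varepsilon|\) from the Jensen identity. The differences are only in bookkeeping: you invoke Lemma \ref{lem:largest-coordinate-active} instead of expanding \(\psi'(a_\varepsilon)-\psi'(c_\varepsilon)\), and you expand \(S_i-S=\psi'(\mu_i)-\sum_j\mu_j\psi'(\mu_j)\) instead of expanding each of \(S(p_\varepsilon)\), \(S(m_\varepsilon)\), \(S_2(p_\varepsilon)\) about \(n\psi(b)\).
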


\begin{proof}
	See Appendix \ref{app:proof-of-nonempty-threshold-interval-lemma}.
\end{proof}

\begin{theorem}[Mechanism theorem: near-barycenter Jensen-difference nonconvexity]
	\label{thm:local-jensen-nonconvexity}
	Let \(n\ge3\), let \(\psi\in C^3(0,1)\) be concave, and assume
	\[
	\psi''(b)<0,
	\qquad
	\eta_\psi''(b)>0.
	\]
	For all sufficiently small \(\varepsilon>0\), set
	\[
	\mathcal I_\varepsilon:=(S_2(m_\varepsilon),C_\varepsilon),
	\]
	where \(C_\varepsilon\) is defined in Lemma \ref{lem:shift-interval}.
	Then, for every
	\(T\in\mathcal I_\varepsilon\), the threshold long-only region
	\(\mathcal L_T\) in \eqref{eq:threshold-long-only-region} is nonconvex.
	More precisely,
	\[
	p_\varepsilon,q_\varepsilon\in\mathcal L_T,
	\qquad
	m_\varepsilon\in D_{\Phi_{-T}}\setminus\mathcal L_T,
	\]
	where \(\Phi_{-T}(\mu)=S(\mu)-T\).  This happens although \(m_\varepsilon\) is closer to
	the barycenter than the two endpoints.  Equivalently, for vertical shifts
	\(B=-T\), every \(B\in(-C_\varepsilon,-S_2(m_\varepsilon))\) produces the
	same midpoint nonconvexity.
\end{theorem}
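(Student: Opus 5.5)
The theorem is essentially a bookkeeping consequence of Lemmas \ref{lem:endpoint-constraints-dominate} and \ref{lem:shift-interval}, combined with the description \eqref{eq:LT-active-form} of \(\mathcal L_T\). I fix \(\varepsilon>0\) small enough that both lemmas apply. Then \(\mathcal I_\varepsilon\) is a nonempty open interval, and I take an arbitrary \(T\in\mathcal I_\varepsilon\), so that \(S_2(m_\varepsilon)<T<C_\varepsilon\).

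\textbf{Endpoints.} By definition, \(C_\varepsilon\) is a lower bound for \(S(p_\varepsilon)\), \(S(q_\varepsilon)\), \(\min_iS_i(p_\varepsilon)\) and \(\min_iS_i(q_\varepsilon)\). Since \(T<C_\varepsilon\), we get \(S(p_\varepsilon)>T\) and \(S(q_\varepsilon)>T\). We also get \(S_i(p_\varepsilon)\ge C_\varepsilon>T\) and \(S_i(q_\varepsilon)>T\) for every \(i\). Hence \(p_\varepsilon,q_\varepsilon\in\mathcal L_T\) by \eqref{eq:threshold-long-only-region}.

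\textbf{Midpoint.} The quantity \(C_\varepsilon\) also bounds \(S(m_\varepsilon)\) from below, so \(S(m_\varepsilon)\ge C_\varepsilon>T\). This gives \(\Phi_{-T}(m_\varepsilon)=S(m_\varepsilon)-T>0\), that is, \(m_\varepsilon\in D_{\Phi_{-T}}\). On the other hand, \(S_2(m_\varepsilon)<T\), so the second constraint fails and \(m_\varepsilon\notin\mathcal L_T\). Because \(m_\varepsilon=\tfrac12(p_\varepsilon+q_\varepsilon)\) lies on the segment joining two points of \(\mathcal L_T\) but is not itself in \(\mathcal L_T\), the region is nonconvex.

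\textbf{Remaining assertions.} The statement that \(m_\varepsilon\) is closer to \(\bar e\) than both endpoints is exactly Lemma \ref{lem:midpoint-closer}. The reformulation in terms of the vertical shift is the substitution \(B=-T\): the condition \(T\in(S_2(m_\varepsilon),C_\varepsilon)\) is equivalent to \(B\in(-C_\varepsilon,-S_2(m_\varepsilon))\), and \(\mathcal L_T\) is by definition the long-only region of \(\Phi_B\).

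There is no genuine obstacle at this level, since all the analytic content is in the two lemmas. The one point that needs care is the positivity \(S(m_\varepsilon)>T\). Without it, \(m_\varepsilon\) could fall outside \(D_{\Phi_{-T}}\) and the generated portfolio would not even be defined there. This is why \(S(m_\varepsilon)\) appears in the minimum defining \(C_\varepsilon\). For that minimum to exceed \(S_2(m_\varepsilon)\), Lemma \ref{lem:shift-interval} must show that \(S(m_\varepsilon)\) is not the binding term. This holds asymptotically: \(S\approx n\psi(b)\), whereas the constraints are approximately \(\psi'(b)+n\eta_\psi(b)=n\psi(b)+(1-nb)\psi'(b)=n\psi(b)\) plus corrections of order \(\varepsilon\). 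Settling this comparison is the job of Lemma \ref{lem:shift-interval}, which I take as given.
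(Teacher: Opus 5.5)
Your proposal is correct and follows the paper's proof essentially line by line. You fix $\varepsilon$ so that Lemmas \ref{lem:endpoint-constraints-dominate} and \ref{lem:shift-interval} apply, use $C_\varepsilon$ as a common lower bound to place $p_\varepsilon,q_\varepsilon$ in $\mathcal L_T$ and $m_\varepsilon$ in $D_{\Phi_{-T}}$, and then use $S_2(m_\varepsilon)<T$ to exclude $m_\varepsilon$; your explicit appeals to Lemma \ref{lem:midpoint-closer} and to the substitution $B=-T$ simply spell out what the paper leaves implicit.
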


\begin{proof}
	Choose \(\varepsilon>0\) small enough so that Lemmas
	\ref{lem:endpoint-constraints-dominate} and \ref{lem:shift-interval}
	apply, and fix \(T\in\mathcal I_\varepsilon\).  Lemma
	\ref{lem:shift-interval} gives \(S_2(m_\varepsilon)<T<C_\varepsilon\).
	Since \(C_\varepsilon\) is bounded above by
	\(S(p_\varepsilon)\), \(S(q_\varepsilon)\), \(S(m_\varepsilon)\),
	\(\min_iS_i(p_\varepsilon)\), and \(\min_iS_i(q_\varepsilon)\), we have
	\[
	S(p_\varepsilon),S(q_\varepsilon)>T,
	\qquad
	S_i(p_\varepsilon),S_i(q_\varepsilon)\ge T
	\quad\text{for all }i.
	\]
	Thus \(p_\varepsilon,q_\varepsilon\in\mathcal L_T\).  Also
	\(S(m_\varepsilon)>T\), so \(m_\varepsilon\in D_{\Phi_{-T}}\).  However
	\(S_2(m_\varepsilon)<T\), so the second long-only constraint fails at
	\(m_\varepsilon\).  Hence
	\(m_\varepsilon\in D_{\Phi_{-T}}\setminus\mathcal L_T\).  Since
	\(m_\varepsilon=(p_\varepsilon+q_\varepsilon)/2\), the region
	\(\mathcal L_T\) is nonconvex.
\end{proof}

\begin{remark}[Interpretation of the intuition failure]
	The second coordinate is fixed along the segment:
	\[
	(p_\varepsilon)_2=(q_\varepsilon)_2=(m_\varepsilon)_2=a_\varepsilon.
	\]
	The midpoint only averages the first and third coordinates and is closer to
	\(\bar e\), but the aggregation term \(\sum_j\eta_\psi(\mu_j)\) can be
	smaller there. 
\end{remark}

\subsection{Quadratic and power-family examples}
\label{sec:quadratic-power}

The preceding theorem turns the midpoint Jensen-difference mechanism into
concrete nonconvex examples.  We record the power family as a model class, with the quadratic family as the simplest special case.  The quadratic
generator is one of the simplest non-affine symmetric separable examples,
while power-type generators are standard in stochastic portfolio theory,
notably through diversity-weighted portfolios; see
\cite{Fernholz2002SPT,FernholzKaratzasKardaras2005DiversityRelativeArbitrage}.

\begin{example}[Power family with \(r>1\) and quadratic family]
	\label{ex:power-family-nonconvex}
	Let \(n\ge3\) and \(r>1\).  Consider the symmetric separable family
	\[
	\Phi_B^{(r)}(\mu)=B-\sum_{i=1}^n\mu_i^r.
	\]
	Then there exist shifts \(B\in\mathbb R\) for which the corresponding
	long-only region is nonconvex.
	
	Indeed, take \(\psi_r(t)=-t^r\).  Then
	\[
	\psi_r''(t)=-r(r-1)t^{r-2}<0,
	\qquad
	\eta_{\psi_r}(t)
	=
	\psi_r(t)-t\psi_r'(t)
	=
	(r-1)t^r,
	\]
	and hence
	\[
	\eta_{\psi_r}''(t)=r(r-1)^2t^{r-2}>0.
	\]
	Thus Theorem \ref{thm:local-jensen-nonconvexity} applies and gives
	shifts \(B\) for which the corresponding long-only region is nonconvex.
	
	The case \(r=2\) is the quadratic family.
	In this case \(\psi(t)=-t^2\), \(\eta_\psi(t)=t^2\), and
	\(\eta_\psi''=2\), so the quadratic nonconvexity examples are recovered as
	a special case of the power family.
\end{example}

\subsubsection{An explicit three-dimensional quadratic witness}
\label{sec:sym-counterexample}

Example \ref{ex:power-family-nonconvex} already gives nonconvex long-only
regions for the quadratic family in every dimension \(n\ge3\).  We record the
three-dimensional case because it is the simplest explicit instance of the
general mechanism and serves as a convenient visual reference.

\begin{example}[A symmetric long-short portfolio with nonconvex long-only region]
	\label{cex:sym-quadratic}
	Let
	\[
	\Phi(p)=\frac{33}{80}-\sum_{i=1}^3p_i^2.
	\]
	Its positive region is
	\[
	D_\Phi
	=
	\left\{p\in\Delta^{(3)}:\sum_{i=1}^3p_i^2<\frac{33}{80}\right\}.
	\]
	Then \(\Phi\) is positive, symmetric and concave on \(D_\Phi\), and its
	long-only region is not convex.
\end{example}

\begin{proof}
	For this generator,
	\[
	G_i(p)=\frac{33}{80}+\sum_{j=1}^3p_j^2-2p_i.
	\]
	Take
	\[
	p=\left(\frac38,\frac38,\frac14\right),
	\qquad
	q=\left(\frac14,\frac38,\frac38\right),
	\qquad
	m=\frac{p+q}{2}=\left(\frac5{16},\frac38,\frac5{16}\right).
	\]
	A direct calculation gives
	\[
	G_2(p)=G_2(q)=\frac1{160},
	\qquad
	G_2(m)=-\frac1{640}.
	\]
	Moreover, one checks that \(\Phi\), \(G_1\), and \(G_3\) are positive at
	all three points \(p,q,m\).  Hence \(p,q\in\mathcal L\), while
	\(m\in D_\Phi\setminus\mathcal L\).  Since \(m=(p+q)/2\), the long-only
	region is not convex.
\end{proof}

\begin{remark}[Geometric form of the constraints]
	Since
	\[
	G_i(p)=\frac{33}{80}+\sum_{j=1}^3p_j^2-2p_i
	=\|p-e_i\|^2-\frac{47}{80},
	\]
	the boundary \(G_i=0\) is a circle, in the affine plane of the simplex,
	centered at the vertex \(e_i\) with radius \(\sqrt{47/80}\).  Thus the
	short-selling region is governed by vertex-centered constraints rather than
	simply by distance from the barycenter.
\end{remark}

Figure \ref{fig:sym-counterexample-visual-summary} shows the positive region,
the constraint \(G_2\ge0\), and the long-only region in the projected
\((p_1,p_2)\)-plane.

\begin{figure}[H]
	\centering
	\begin{subfigure}[b]{0.32\textwidth}
		\centering
		\includegraphics[width=\textwidth]{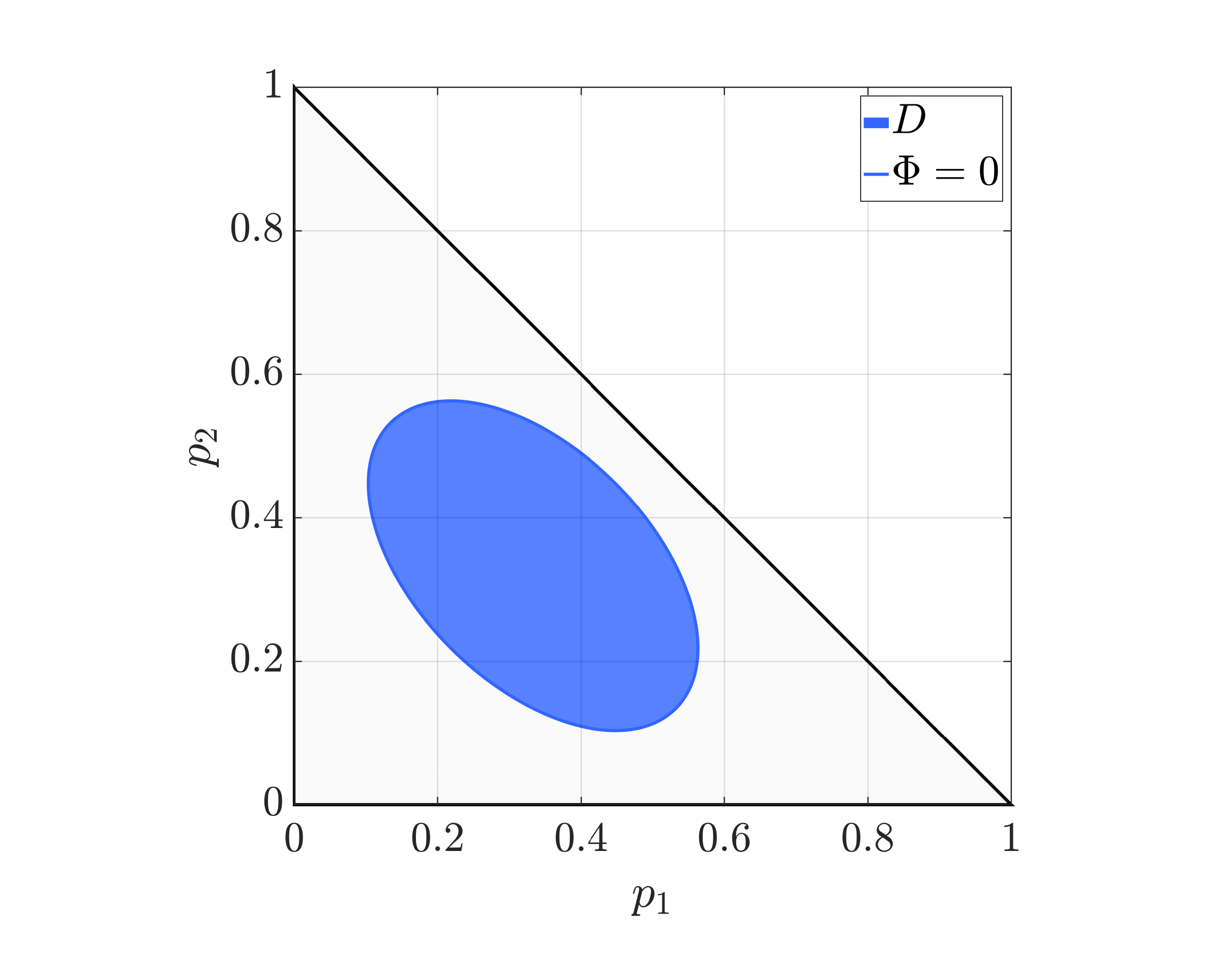}
		\caption{Positive region \(D_\Phi\)}
	\end{subfigure}
	\hfill
	\begin{subfigure}[b]{0.32\textwidth}
		\centering
		\includegraphics[width=\textwidth]{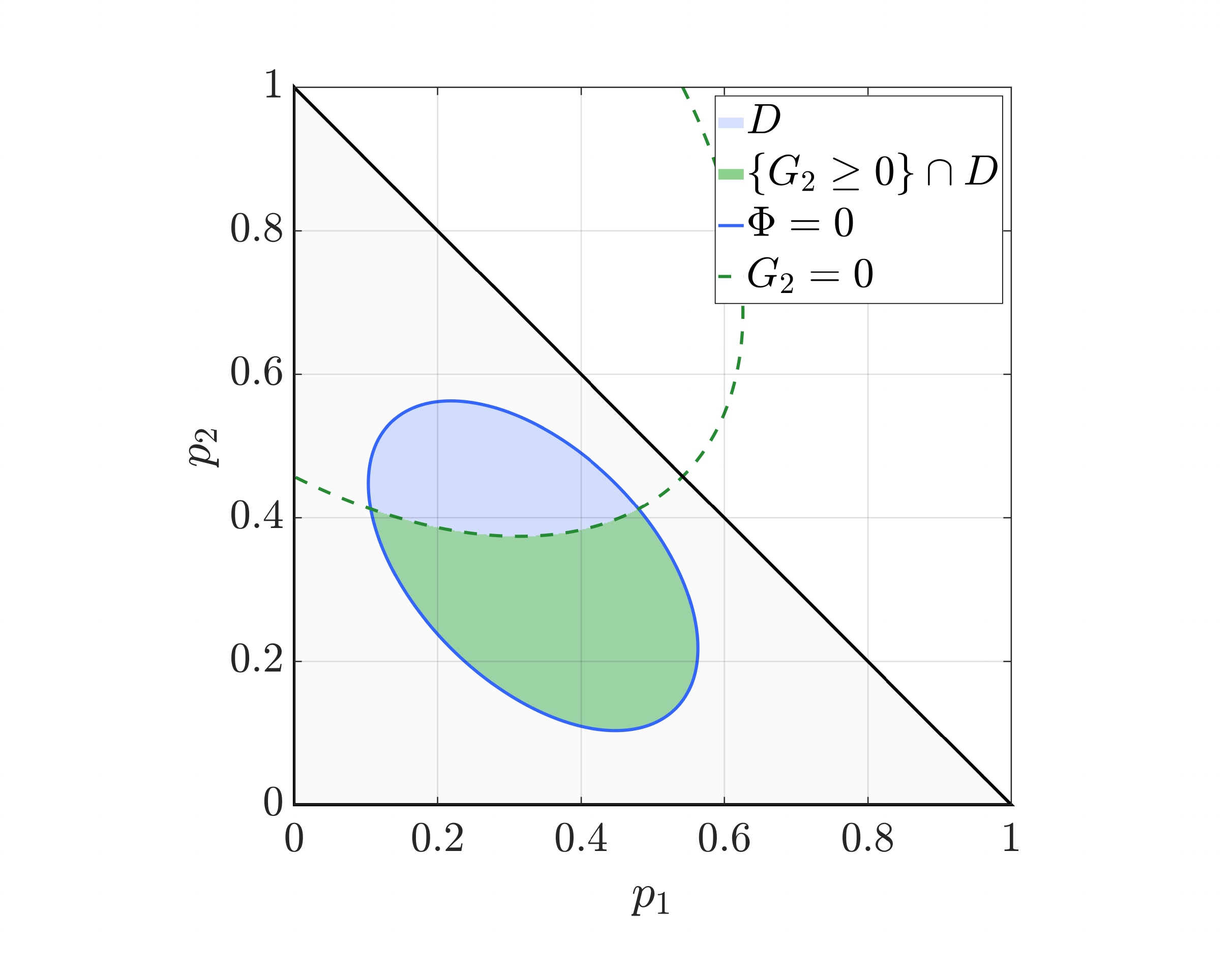}
		\caption{The constraint \(G_2\ge0\)}
	\end{subfigure}
	\hfill
	\begin{subfigure}[b]{0.32\textwidth}
		\centering
		\includegraphics[width=\textwidth]{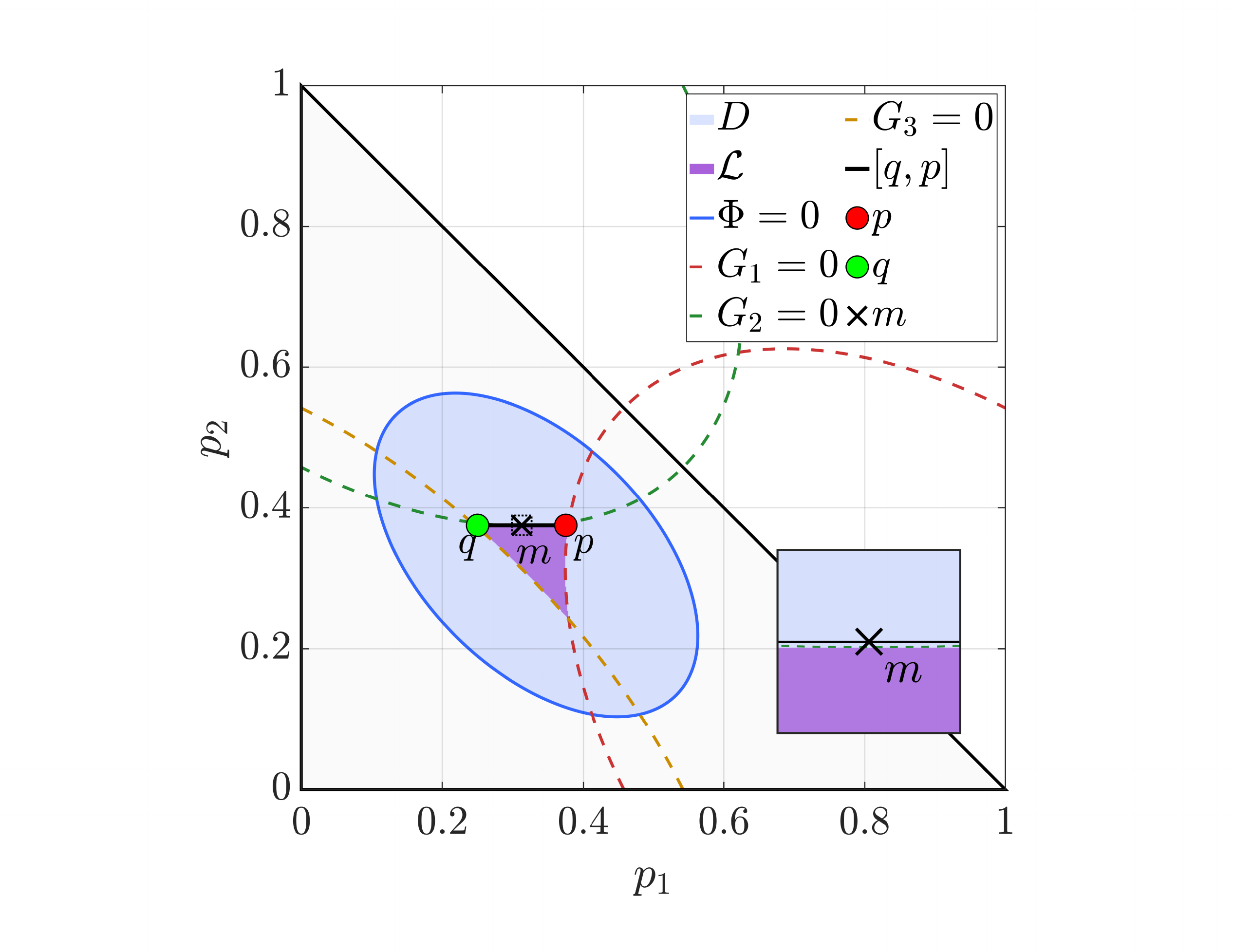}
		\caption{Long-only region}
	\end{subfigure}
	\caption{The three-dimensional quadratic counterexample.  The endpoints
		are long-only, while their midpoint fails the second long-only
		constraint.}
	\label{fig:sym-counterexample-visual-summary}
\end{figure}
\section{Barycentric stability: criterion and failure}
\label{sec:star-shaped}

The previous section shows that convexity of \(\eta_\psi\) near the barycenter can produce
chords whose endpoints are long-only while their midpoint is not.  Such a
midpoint construction tests convexity along arbitrary chords, but it does not
address the distinguished direction toward the barycenter.  We therefore
study barycentric star-shapedness, the center-directed stability condition
appropriate for symmetric problems on the simplex. 

\subsection{The threshold criterion}
\label{subsec:minimal-constraint-threshold}

The next result gives a necessary and sufficient threshold inequality for
star-shapedness with respect to \(\bar e\).  The parameter \(T\) is only the
vertical shift of the same symmetric separable shape.  Requiring the condition
for all \(T\) separates the geometry of the generator from a particular
normalization.

\begin{theorem}[Criterion theorem: barycentric threshold criterion]
	\label{thm:sharp-barycentric-threshold}
	Let \(n\ge2\), and let \(\psi\in C^2(0,1)\) be concave.  With
	\(\Theta_\psi\) as in Definition \ref{def:minimal-constraint-function} and
	\(\mathcal L_T\) as in \eqref{eq:LT-active-form}, the following are
	equivalent:
	\begin{enumerate}
		\item For every \(T\in\mathbb R\), the region \(\mathcal L_T\) is
		star-shaped with respect to \(\bar e\), whenever it is nonempty.
		\item For every \(\mu\in\Delta^{(n)}\) and \(\lambda\in[0,1]\),
		\begin{equation}\label{eq:sharp-threshold-inequality}
			\Theta_\psi(\mu^\lambda)
			\ge
			\min\{S(\mu),\Theta_\psi(\mu)\},
			\qquad
			\mu^\lambda:=(1-\lambda)\bar e+\lambda\mu.
		\end{equation}
	\end{enumerate}
\end{theorem}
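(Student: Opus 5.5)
The plan is to separate the two defining conditions of \(\mathcal L_T\) in \eqref{eq:LT-active-form}. The positivity condition \(S>T\) turns out to be automatically preserved along barycentric segments. The equivalence then reduces to a statement about \(\Theta_\psi\) alone, which is handled by choosing the threshold \(T\) appropriately. Throughout, star-shapedness of \(\mathcal L_T\) with respect to \(\bar e\) means: for every \(\mu\in\mathcal L_T\) and \(\lambda\in[0,1]\), the point \(\mu^\lambda\) lies in \(\mathcal L_T\). The case \(\lambda=0\) gives \(\bar e\in\mathcal L_T\).

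\textbf{Step 1 (the positive region is barycentrically stable).} I first show \(S(\mu^\lambda)\ge S(\mu)\) for all \(\mu\in\Delta^{(n)}\) and \(\lambda\in[0,1]\). Since \(\psi\) is concave, \(S\) is concave on \(\Delta^{(n)}\). Since \(S\) is symmetric, let \(\sigma\) run over the \(n\) cyclic shifts of coordinates. Then \(\bar e=\frac1n\sum_\sigma \sigma\mu\), and Jensen gives
\[
S(\bar e)\ge\frac1n\sum_\sigma S(\sigma\mu)=S(\mu).
\]
Concavity along the segment then yields
\[
S(\mu^\lambda)\ge(1-\lambda)S(\bar e)+\lambda S(\mu)\ge S(\mu).
\]
Consequently, if \(S(\mu)>T\), then \(S(\mu^\lambda)>T\) for every \(\lambda\in[0,1]\). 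I expect this to be the only step requiring a genuine argument rather than bookkeeping. It is exactly what makes \(S(\mu)\) appear in the minimum on the right of \eqref{eq:sharp-threshold-inequality} only as a membership filter.

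\textbf{Step 2 ((2)\(\Rightarrow\)(1)).} Fix \(T\) with \(\mathcal L_T\neq\emptyset\), and take \(\mu\in\mathcal L_T\) and \(\lambda\in[0,1]\). Membership gives \(S(\mu)>T\) and \(\Theta_\psi(\mu)\ge T\), so \(\min\{S(\mu),\Theta_\psi(\mu)\}\ge T\). Inequality \eqref{eq:sharp-threshold-inequality} then gives \(\Theta_\psi(\mu^\lambda)\ge T\), and Step 1 gives \(S(\mu^\lambda)>T\). Hence \(\mu^\lambda\in\mathcal L_T\).

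\textbf{Step 3 ((1)\(\Rightarrow\)(2), by contradiction).} Suppose some \(\mu\in\Delta^{(n)}\) and \(\lambda\in[0,1]\) satisfy
\[
\Theta_\psi(\mu^\lambda)<m:=\min\{S(\mu),\Theta_\psi(\mu)\}.
\]
Choose any \(T\) with \(\Theta_\psi(\mu^\lambda)<T<m\). Because \(T<m\) strictly, we have \(S(\mu)>T\) and \(\Theta_\psi(\mu)>T\), so \(\mu\in\mathcal L_T\). In particular \(\mathcal L_T\) is nonempty, so by (1) it is star-shaped with respect to \(\bar e\), and therefore \(\mu^\lambda\in\mathcal L_T\). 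This forces \(\Theta_\psi(\mu^\lambda)\ge T\), contradicting the choice of \(T\).

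The care needed here is only in this strict-inequality choice of \(T\), which is what allows the strict condition \(S>T\) in the definition of \(\mathcal L_T\) to be met. With that, the two implications together prove the equivalence.
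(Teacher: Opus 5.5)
Your proposal is correct and follows the paper's proof essentially step for step: first $S(\mu^\lambda)\ge S(\mu)$ from concavity, then (2)$\Rightarrow$(1) by a direct membership check, and (1)$\Rightarrow$(2) by choosing $T$ strictly between $\Theta_\psi(\mu^\lambda)$ and $\min\{S(\mu),\Theta_\psi(\mu)\}$. The only cosmetic difference is that you obtain $S(\mu)\le S(\bar e)$ by averaging over cyclic shifts, whereas the paper applies Jensen's inequality to $\psi$ directly, $\frac1n\sum_i\psi(\mu_i)\le\psi(1/n)$.
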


\begin{proof}
	First, \(S\) does not decrease under barycentric contraction.  Indeed, by
	concavity of \(\psi\),
	\[
	S(\mu^\lambda)
	\ge
	(1-\lambda)n\psi(1/n)+\lambda S(\mu)
	\ge S(\mu),
	\]
	where the last inequality is Jensen's inequality \(S(\mu)\le n\psi(1/n)\).
	
	Assume first that every nonempty \(\mathcal L_T\) is star-shaped.  If
	\eqref{eq:sharp-threshold-inequality} failed for some \(\mu,\lambda\), one
	could choose \(T\) such that
	\[
	\Theta_\psi(\mu^\lambda)<T<\min\{S(\mu),\Theta_\psi(\mu)\}.
	\]
	Then \(\mu\in\mathcal L_T\), but star-shapedness would force
	\(\mu^\lambda\in\mathcal L_T\), contradicting
	\(\Theta_\psi(\mu^\lambda)<T\).
	
	Conversely, assume \eqref{eq:sharp-threshold-inequality}.  If
	\(\mu\in\mathcal L_T\), then \(S(\mu)>T\) and \(\Theta_\psi(\mu)\ge T\).
	Hence \(S(\mu^\lambda)\ge S(\mu)>T\), and
	\[
	\Theta_\psi(\mu^\lambda)
	\ge
	\min\{S(\mu),\Theta_\psi(\mu)\}
	\ge T.
	\]
	Thus \(\mu^\lambda\in\mathcal L_T\).
\end{proof}

\begin{remark}[Role of the criterion]
	The criterion is not meant to be the final recovery result by itself.  Its
	role is to remove the threshold parameter from the geometry and reduce
	uniform barycentric stability to a pointwise inequality for
	\(\Theta_\psi\).  The quantitative deficit index and the recovery theorems
	below are the main uses of this reduction.
\end{remark}

\begin{definition}[Barycentric threshold deficit index]
	\label{def:barycentric-deficit-index}
	For \(\mu\in\Delta^{(n)}\) and \(\lambda\in[0,1]\), define
	\[
	d_\psi(\mu,\lambda)
	:=
	\left[
	\min\{S(\mu),\Theta_\psi(\mu)\}
	-
	\Theta_\psi(\mu^\lambda)
	\right]_+,
	\qquad
	\mu^\lambda=(1-\lambda)\bar e+\lambda\mu.
	\]
	The global barycentric threshold deficit is
	\[
	\mathfrak D_\psi
	:=
	\sup_{\mu\in\Delta^{(n)},\ 0\le\lambda\le1}
	d_\psi(\mu,\lambda).
	\]
\end{definition}

\begin{corollary}[Deficit index and uniform barycentric star-shapedness]
	\label{cor:deficit-index}
	With the notation above, \(\mathfrak D_\psi=0\) if and only if every
	nonempty \(\mathcal L_T\), \(T\in\mathbb R\), is star-shaped with respect
	to \(\bar e\).
\end{corollary}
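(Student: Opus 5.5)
The corollary reduces to Theorem \ref{thm:sharp-barycentric-threshold}. The plan is to show that \(\mathfrak D_\psi=0\) is exactly condition (2) of that theorem, and then invoke the equivalence (1)\(\Leftrightarrow\)(2) proved there.

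\textbf{Step 1 (unpacking the deficit).} Each \(d_\psi(\mu,\lambda)\) is a positive part, so \(d_\psi(\mu,\lambda)\ge0\). Hence \(\mathfrak D_\psi\in[0,\infty]\) is a supremum of nonnegative quantities, and \(\mathfrak D_\psi=0\) holds if and only if \(d_\psi(\mu,\lambda)=0\) for every \(\mu\in\Delta^{(n)}\) and \(\lambda\in[0,1]\). One point needs care: the supremum might be \(+\infty\), but this only matters for the ``if'' direction, where it is harmless, since \(\mathfrak D_\psi=0\) rules it out.

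\textbf{Step 2 (pointwise translation).} For real numbers, \([x]_+=0\) if and only if \(x\le0\). Therefore \(d_\psi(\mu,\lambda)=0\) if and only if
\[
\Theta_\psi(\mu^\lambda)\ge\min\{S(\mu),\Theta_\psi(\mu)\}.
\]
This is precisely inequality \eqref{eq:sharp-threshold-inequality}. Thus \(\mathfrak D_\psi=0\) is equivalent to condition (2) of Theorem \ref{thm:sharp-barycentric-threshold} holding for all \(\mu\) and \(\lambda\).

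\textbf{Step 3 (apply the criterion).} By Theorem \ref{thm:sharp-barycentric-threshold}, condition (2) is equivalent to condition (1), namely that every nonempty \(\mathcal L_T\) is star-shaped with respect to \(\bar e\). Chaining Steps 1–3 gives the corollary.

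There is no real obstacle here. The only thing to check is that all quantities are finite real numbers, so that the positive-part manipulation is valid. This holds because \(\psi\in C^2(0,1)\) and \(\mu^\lambda\in\Delta^{(n)}\) for \(\lambda\in[0,1]\), by convexity of the open simplex, so \(S\) and \(\Theta_\psi\) are finite at \(\mu\) and at \(\mu^\lambda\).
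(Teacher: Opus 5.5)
Your proposal is correct and matches the paper's argument: the paper also notes that \(\mathfrak D_\psi=0\) is inequality \eqref{eq:sharp-threshold-inequality} rewritten through the positive part, and then invokes Theorem \ref{thm:sharp-barycentric-threshold}. Your caveat about \(\mathfrak D_\psi=+\infty\) is unnecessary, since a supremum of nonnegative extended reals is zero if and only if every term is zero, but it does no harm.
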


\begin{proof}
	This is precisely the threshold criterion in Theorem
	\ref{thm:sharp-barycentric-threshold}, written in terms of the positive
	part of the threshold deficit.
\end{proof}

\begin{remark}[Interpretation of the deficit index]
	The index \(\mathfrak D_\psi\) measures the largest threshold interval
	length over which barycentric star-shapedness can fail.  More precisely, if
	\(d_\psi(\mu,\lambda)>0\), then every threshold
	\[
	T\in
	\bigl(\Theta_\psi(\mu^\lambda),\min\{S(\mu),\Theta_\psi(\mu)\}\bigr)
	\]
	satisfies \(\mu\in\mathcal L_T\) but
	\(\mu^\lambda\notin\mathcal L_T\), and the length of this interval is
	\(d_\psi(\mu,\lambda)\).  Thus \(\mathfrak D_\psi=0\) is the necessary and sufficient
	condition for uniform barycentric star-shapedness over all vertical shifts,
	while a positive value records the size of the worst barycentric threshold
	gap.
\end{remark}

\subsection{A long-only region which is not barycentrically star-shaped}
\label{subsec:nonstar-barycenter-example}

The preceding criterion is sharp, and the required inequality may fail even
within the symmetric separable class.  The following example gives a direct
failure of barycentric star-shapedness.

\begin{example}[Failure of barycentric star-shapedness]
	\label{ex:nonstar-barycenter}
	Work on \(\Delta^{(4)}\) with barycenter \(\bar e\), and let
	\[
	\psi(t)=-\frac{(1-t)^5}{20},
	\qquad 0<t<1.
	\]
	Then \(\psi'(t)=(1-t)^4/4\), \(\psi''(t)=-(1-t)^3<0\), and
	\[
	\eta_\psi(t)=\psi(t)-t\psi'(t)
	=-\frac{(1-t)^4(1+4t)}{20}.
	\]
	Take
	\[
	p=(0.6,0.399,0.0005,0.0005),
	\qquad
	\lambda=0.98,
	\qquad
	p^\lambda=(1-\lambda)\bar e+\lambda p,
	\]
	so \(p^\lambda=(0.593,0.39602,0.00549,0.00549)\).  For
	\(T=-0.11491\), direct calculation gives
	\[
	\begin{array}{c|cccc}
		\mu & S(\mu) & S(\mu)-T & \Theta_\psi(\mu) & \Theta_\psi(\mu)-T \\
		\hline
		\bar e & -0.047461 & 0.067449 & -0.047461 & 0.067449 \\
		p & -0.104183 & 0.010727 & -0.114886 & 0.000024 \\
		p^\lambda & -0.101862 & 0.013048 & -0.114930 & -0.000020
	\end{array}
	\]
	At the barycenter, \(S(\bar e)=\Theta_\psi(\bar e)\) because all
	coordinates are equal.  Thus \(\bar e,p\in\mathcal L_T\), while
	\(p^\lambda\in D_{\Phi_{-T}}\setminus\mathcal L_T\).  Since
	\(p^\lambda\) lies on the segment from \(\bar e\) to \(p\), the region
	\(\mathcal L_T\) is not star-shaped with respect to the barycenter.
\end{example}

To visualize the example, consider the slice
\[\Sigma=\{(x,y,z,z)\in\Delta^{(4)}:z=(1-x-y)/2\},\]
which contains \(\bar e\), \(p\), and \(p^\lambda\).
Figure~\ref{fig:nonstar-figure-group} shows the positive region, the
corresponding long-only region on this slice, and a zoom near \(p^\lambda\).

\begin{figure}[H]
	\centering
	\begin{subfigure}[b]{0.32\textwidth}
		\centering
		\includegraphics[width=\textwidth]{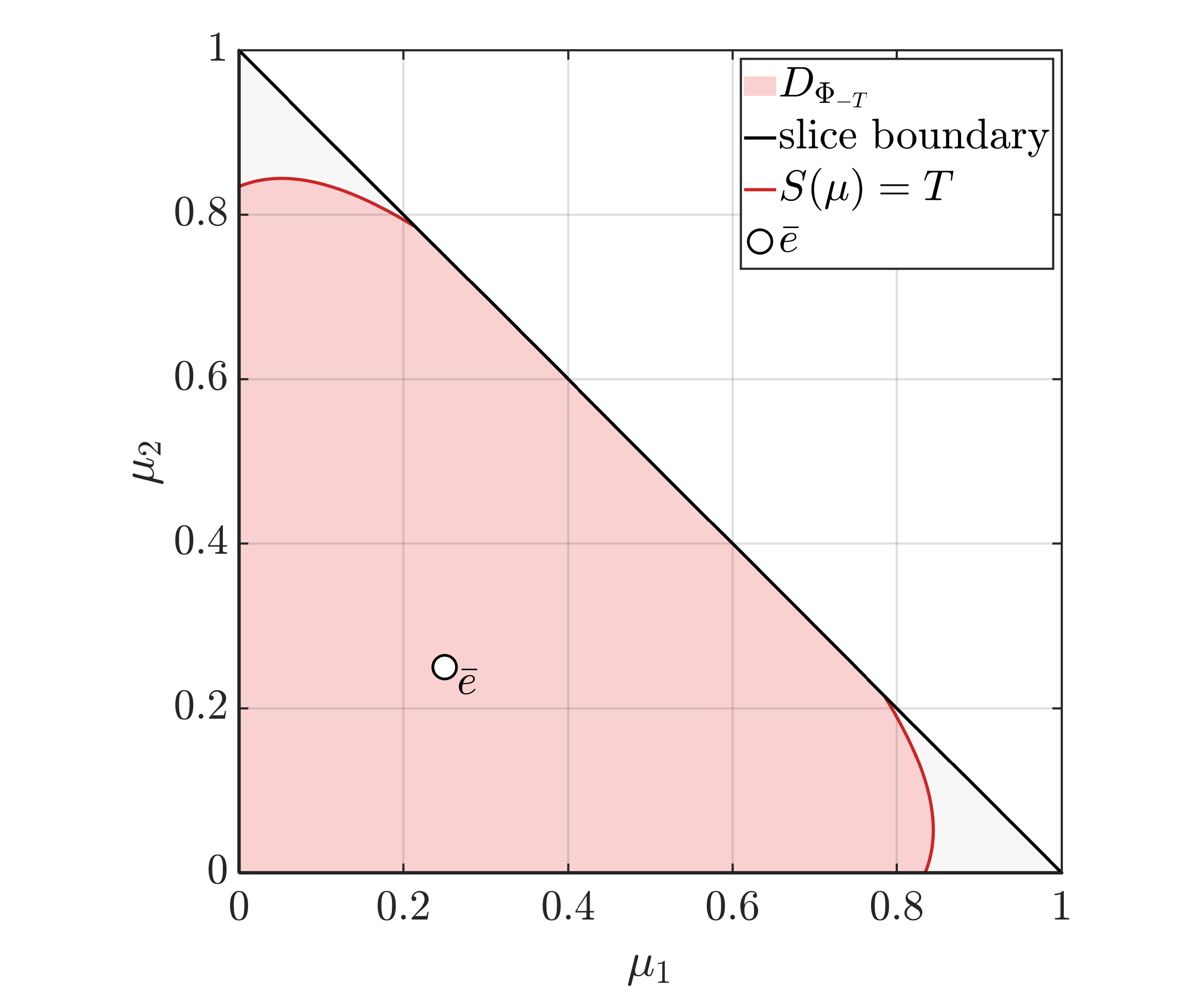}
		\caption{\(D_{\Phi_{-T}}\cap\Sigma\).}
		\label{fig:nonstar-domain-slice}
	\end{subfigure}
	\hfill
	\begin{subfigure}[b]{0.32\textwidth}
		\centering
		\includegraphics[width=\textwidth]{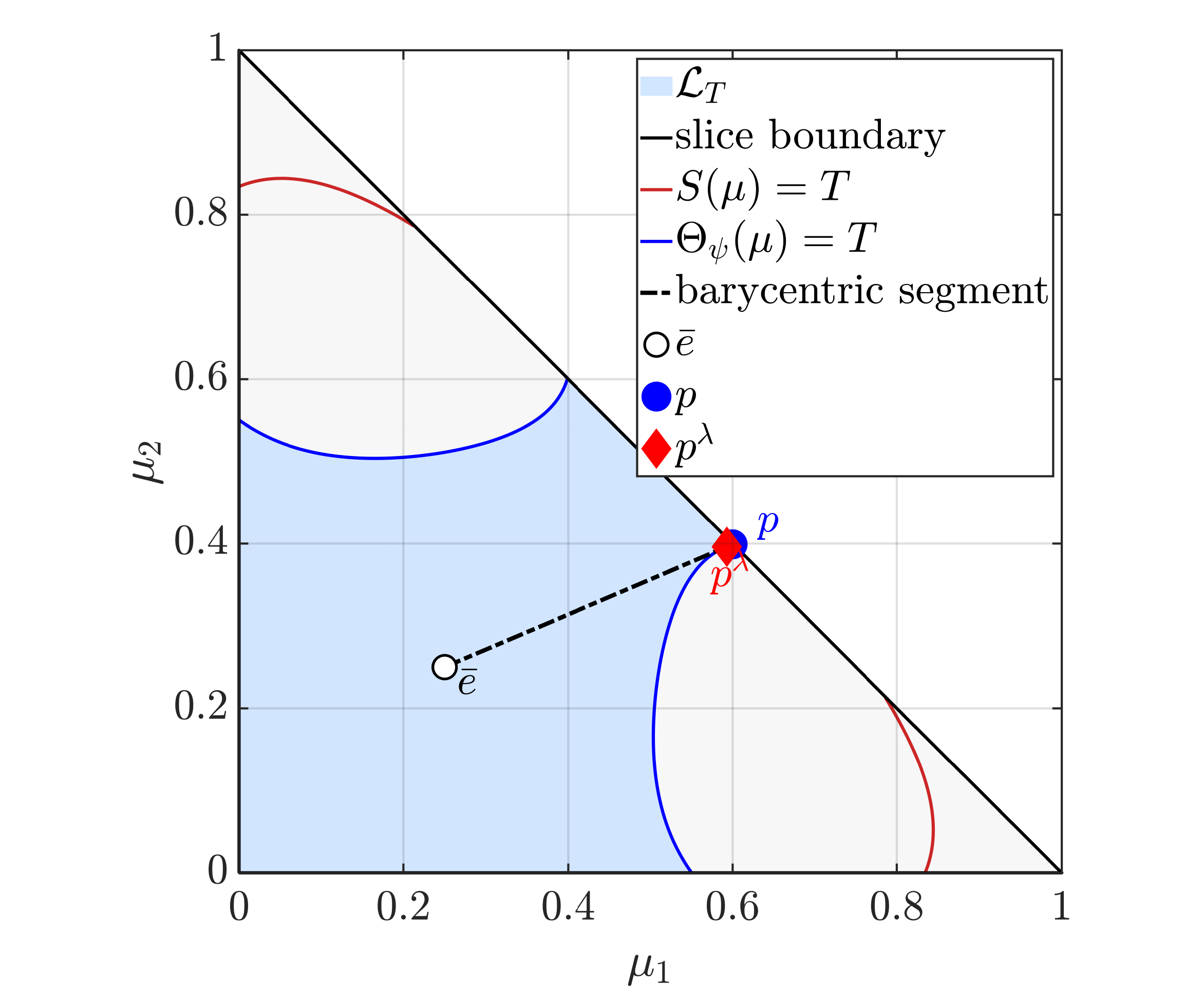}
		\caption{\(\mathcal L_T\cap\Sigma\).}
		\label{fig:nonstar-global-slice}
	\end{subfigure}
	\hfill
	\begin{subfigure}[b]{0.32\textwidth}
		\centering
		\includegraphics[width=\textwidth]{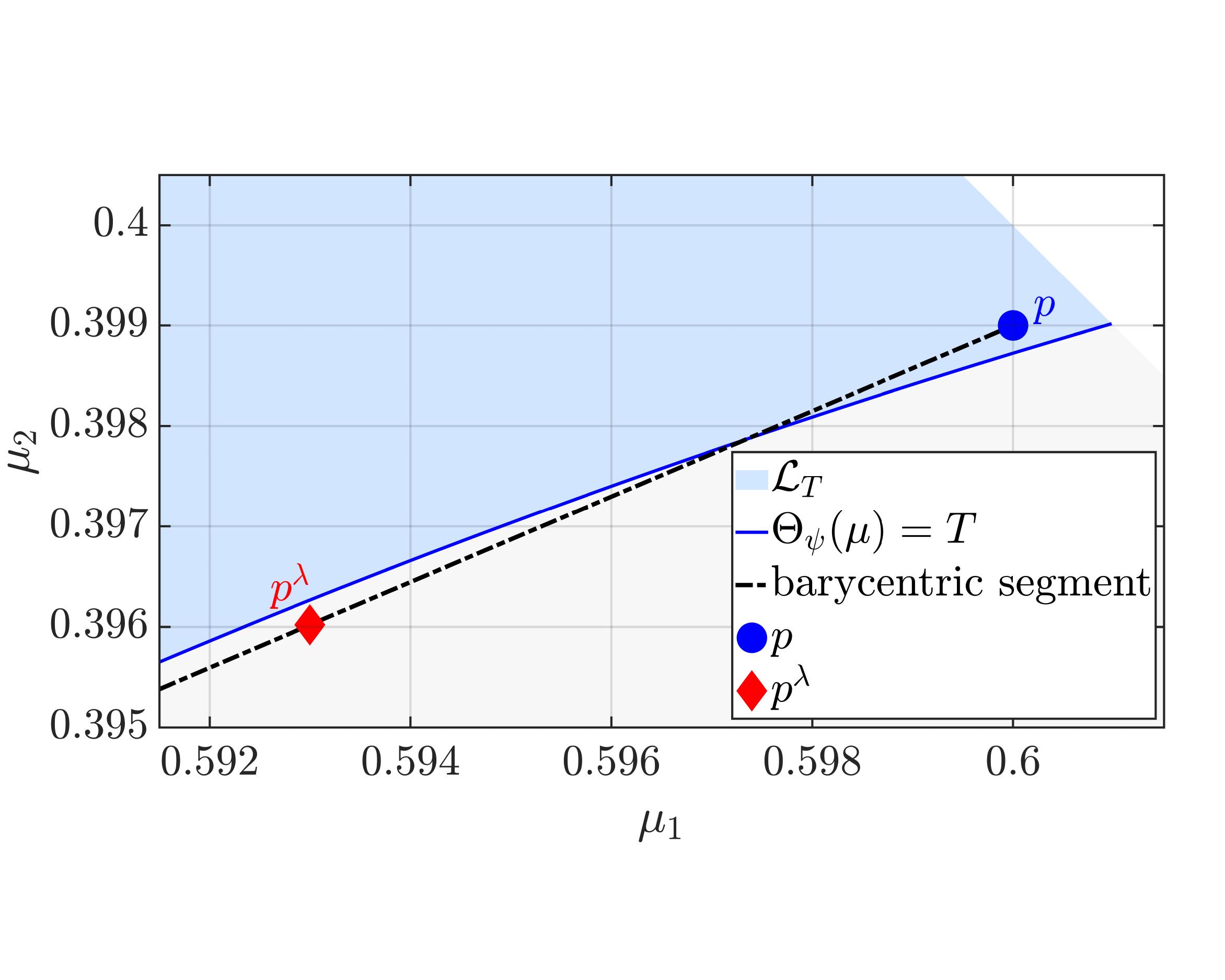}
		\caption{Zoom near \(p^\lambda\).}
		\label{fig:nonstar-zoom}
	\end{subfigure}
	\caption{Failure of barycentric star-shapedness on the slice \(\Sigma\).
		The first panel shows the positive region of the shifted generator.
		The point \(p\) is long-only, while the intermediate point \(p^\lambda\)
		lies in \(D_{\Phi_{-T}}\setminus\mathcal L_T\).}
	\label{fig:nonstar-figure-group}
\end{figure}
\section{Recovery of barycentric stability}
\label{sec:recovery-of-barycentric-stability}
The preceding section shows that barycentric star-shapedness can fail.  We now
give two sufficient conditions that recover it: concavity of the aggregation
function and second-derivative domination by the coordinate term.

\subsection{Sufficient conditions: concavity and second-derivative domination}
\label{subsec:sufficient-conditions-star-shapedness}

The first recovery result shows that concavity of the aggregation function
\(\eta_\psi\) is sufficient to restore barycentric star-shapedness.

\begin{lemma}[Concavity of \(\eta_\psi\) implies barycentric monotonicity]
	\label{lem:concave-eta-implies-theta}
	If \(\eta_\psi\) is concave on \((0,1)\), then
	\[
	\Theta_\psi(\mu^\lambda)\ge\Theta_\psi(\mu),
	\qquad
	\mu^\lambda=(1-\lambda)\bar e+\lambda\mu.
	\]
\end{lemma}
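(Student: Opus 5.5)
The plan is to split \(\Theta_\psi(\mu^\lambda)=\sum_j\eta_\psi(\mu^\lambda_j)+\psi'(\mu^\lambda_{(1)})\) into its aggregation part and its coordinate part, and to show that each part is at least the corresponding part of \(\Theta_\psi(\mu)\). Throughout, \(\lambda\in[0,1]\) and \(\mu^\lambda_j=(1-\lambda)/n+\lambda\mu_j\). Each \(\mu^\lambda_j\) lies in \((0,1)\), being a convex combination of \(1/n\) and \(\mu_j\).

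\emph{Aggregation term.} Concavity of \(\eta_\psi\) gives, coordinatewise,
\[
\eta_\psi(\mu^\lambda_j)\ge(1-\lambda)\eta_\psi(1/n)+\lambda\eta_\psi(\mu_j).
\]
Summing over \(j\) yields
\[
\sum_j\eta_\psi(\mu^\lambda_j)\ge(1-\lambda)n\eta_\psi(1/n)+\lambda\sum_j\eta_\psi(\mu_j).
\]
Jensen's inequality for the concave function \(\eta_\psi\) at the weights \(\mu\), averaged uniformly, gives \(\frac1n\sum_j\eta_\psi(\mu_j)\le\eta_\psi\bigl(\frac1n\sum_j\mu_j\bigr)=\eta_\psi(1/n)\). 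Hence \(n\eta_\psi(1/n)\ge\sum_j\eta_\psi(\mu_j)\), and therefore \(\sum_j\eta_\psi(\mu^\lambda_j)\ge\sum_j\eta_\psi(\mu_j)\). This is exactly the argument used for \(S\) at the start of the proof of Theorem \ref{thm:sharp-barycentric-threshold}, with \(\eta_\psi\) in place of \(\psi\).

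\emph{Coordinate term.} The map \(t\mapsto(1-\lambda)/n+\lambda t\) is nondecreasing, so the largest coordinate of \(\mu^\lambda\) is attained at the same index as that of \(\mu\). Thus \(\mu^\lambda_{(1)}=(1-\lambda)/n+\lambda\mu_{(1)}\). Since \(\mu_{(1)}\ge1/n\), this value lies between \(1/n\) and \(\mu_{(1)}\), so \(\mu^\lambda_{(1)}\le\mu_{(1)}\). Because \(\psi\) is concave, \(\psi'\) is nonincreasing, and so \(\psi'(\mu^\lambda_{(1)})\ge\psi'(\mu_{(1)})\). Adding this to the aggregation inequality, Lemma \ref{lem:largest-coordinate-active} gives \(\Theta_\psi(\mu^\lambda)\ge\Theta_\psi(\mu)\).

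The only step that needs any care is the coordinate term. One must notice that the minimal constraint at \(\mu^\lambda\) is still governed by a largest coordinate, and that contraction toward \(\bar e\) can only shrink \(\mu_{(1)}\) because \(\mu_{(1)}\ge1/n\). Once that is observed, both inequalities are one-line concavity arguments.
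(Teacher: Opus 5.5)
Your proof is correct and follows the paper's argument: you handle the aggregation term with the same two-step Jensen argument used for $S$ in Theorem~\ref{thm:sharp-barycentric-threshold}, and the coordinate term via $(\mu^\lambda)_{(1)}=(1-\lambda)/n+\lambda\mu_{(1)}\le\mu_{(1)}$ together with monotonicity of $\psi'$. You spell out slightly more detail than the paper, namely why the largest coordinate is attained at the same index and why $\mu_{(1)}\ge 1/n$, but the structure is identical.
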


\begin{proof}
	By the same Jensen argument used for \(S\) in Theorem
	\ref{thm:sharp-barycentric-threshold},
	\[
	\sum_i\eta_\psi(\mu_i^\lambda)\ge\sum_i\eta_\psi(\mu_i).
	\]
	Also
	\[
	(\mu^\lambda)_{(1)}=(1-\lambda)/n+\lambda\mu_{(1)}\le\mu_{(1)},
	\]
	and \(\psi'\) is nonincreasing.  Adding these inequalities gives the claim.
\end{proof}

\begin{theorem}[Recovery theorem under concave aggregation]
	\label{thm:star-shaped-concave-eta}
	Let \(n\ge2\).  If \(\psi\) is concave and
	\(\eta_\psi(t)=\psi(t)-t\psi'(t)\) is concave on
	\((0,1)\), then every nonempty \(\mathcal L_T\) is star-shaped with respect
	to \(\bar e\).
\end{theorem}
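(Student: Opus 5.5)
The plan is to deduce the statement from the threshold criterion in Theorem \ref{thm:sharp-barycentric-threshold}, using Lemma \ref{lem:concave-eta-implies-theta} as the only substantive input. By that criterion, every nonempty \(\mathcal L_T\) is star-shaped with respect to \(\bar e\) as soon as \eqref{eq:sharp-threshold-inequality} holds for every \(\mu\in\Delta^{(n)}\) and \(\lambda\in[0,1]\). So it suffices to check that inequality.

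Fix \(\mu\) and \(\lambda\), and set \(\mu^\lambda=(1-\lambda)\bar e+\lambda\mu\). Since \(\eta_\psi\) is concave, Lemma \ref{lem:concave-eta-implies-theta} gives \(\Theta_\psi(\mu^\lambda)\ge\Theta_\psi(\mu)\). Trivially \(\Theta_\psi(\mu)\ge\min\{S(\mu),\Theta_\psi(\mu)\}\), and combining the two gives \eqref{eq:sharp-threshold-inequality}. Theorem \ref{thm:sharp-barycentric-threshold} (direction (2)\(\Rightarrow\)(1)) then yields the conclusion for every \(T\).

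As a self-contained alternative, one can argue directly. Take \(\mu\in\mathcal L_T\), so that \(S(\mu)>T\) and \(\Theta_\psi(\mu)\ge T\). The Jensen argument in the proof of Theorem \ref{thm:sharp-barycentric-threshold} gives \(S(\mu^\lambda)\ge S(\mu)>T\). The lemma gives \(\Theta_\psi(\mu^\lambda)\ge\Theta_\psi(\mu)\ge T\). By \eqref{eq:LT-active-form}, this means \(\mu^\lambda\in\mathcal L_T\).

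The only point needing care is inside Lemma \ref{lem:concave-eta-implies-theta}, namely the Jensen step \(\sum_i\eta_\psi(\mu_i^\lambda)\ge\sum_i\eta_\psi(\mu_i)\). Concavity of \(\eta_\psi\) on \((0,1)\) gives \(\eta_\psi(\mu_i^\lambda)\ge(1-\lambda)\eta_\psi(1/n)+\lambda\eta_\psi(\mu_i)\). Summing over \(i\) and using \(\sum_i\eta_\psi(\mu_i)\le n\eta_\psi(1/n)\), which is Jensen with weights \(1/n\) and \(\sum_i\mu_i=1\), proves the step. The largest-coordinate comparison \((\mu^\lambda)_{(1)}\le\mu_{(1)}\) holds because \(\mu_{(1)}\ge 1/n\). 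This inequality is combined with the fact that \(\psi'\) is nonincreasing. I expect no real obstacle: the argument is a direct assembly of earlier results.
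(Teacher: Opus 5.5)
Your proposal is correct and follows the paper's proof: Lemma~\ref{lem:concave-eta-implies-theta} gives \(\Theta_\psi(\mu^\lambda)\ge\Theta_\psi(\mu)\ge\min\{S(\mu),\Theta_\psi(\mu)\}\), so \eqref{eq:sharp-threshold-inequality} holds, and Theorem~\ref{thm:sharp-barycentric-threshold} then gives star-shapedness of every nonempty \(\mathcal L_T\). Your direct alternative is just the (2)\(\Rightarrow\)(1) direction of that criterion written out, and your checks of the Jensen step and of \((\mu^\lambda)_{(1)}\le\mu_{(1)}\) are accurate.
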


\begin{proof}
	By Lemma \ref{lem:concave-eta-implies-theta},
	\(\Theta_\psi(\mu^\lambda)\ge\Theta_\psi(\mu)\), hence
	\eqref{eq:sharp-threshold-inequality} holds.  The result follows from
	Theorem \ref{thm:sharp-barycentric-threshold}.
\end{proof}

\begin{remark}[The sign of the Jensen difference]
	Convexity of \(\eta_\psi\) may lower the aggregation term under averaging
	and create midpoint failures.  Concavity of \(\eta_\psi\) has the opposite
	effect along barycentric segments.
\end{remark}

We next replace concavity of \(\eta_\psi\) by a second-derivative domination
condition.  Positive second derivative of \(\eta_\psi\) is allowed, provided
it is controlled by the concavity of \(\psi\).

\begin{lemma}[Barycentric difference identities]
	\label{lem:barycentric-difference-identities}
	Assume that \(\psi\) and \(\eta_\psi\) are \(C^2\) on \((0,1)\), where
	\(\eta_\psi(t)=\psi(t)-t\psi'(t)\).  For \(\mu\in\Delta^{(n)}\), set
	\[
	b=\frac1n,
	\qquad
	\delta_i=\mu_i-b,
	\qquad
	\delta_*:=\mu_{(1)}-b,
	\qquad
	\mu^\lambda=(1-\lambda)\bar e+\lambda\mu.
	\]
	Then
	\begin{equation}\label{eq:exact-S-barycentric-identity}
		S(\mu^\lambda)-S(\mu)
		=
		\int_\lambda^1\int_0^s
		\sum_{i=1}^n\delta_i^2[-\psi''(b+u\delta_i)]\,du\,ds,
	\end{equation}
	and
	\begin{equation}\label{eq:exact-Theta-barycentric-identity}
		\begin{aligned}
			\Theta_\psi(\mu^\lambda)-\Theta_\psi(\mu)
			&=
			\int_\lambda^1\int_0^s
			\sum_{i=1}^n\delta_i^2[-\eta_\psi''(b+u\delta_i)]\,du\,ds  \\
			&\quad+
			\int_\lambda^1\delta_*[-\psi''(b+s\delta_*)]\,ds .
		\end{aligned}
	\end{equation}
	In particular, if \(\psi\) and \(\eta_\psi\) are concave, then both \(S\)
	and the minimal constraint function \(\Theta_\psi\) do not decrease under
	barycentric contraction.
\end{lemma}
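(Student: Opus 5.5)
The plan is to write each quantity as a function of the contraction parameter along the barycentric segment and apply the fundamental theorem of calculus twice. For each coordinate set \(f_i(s):=\psi(b+s\delta_i)\), so that \(S(\mu^s)=\sum_i f_i(s)\) with \((\mu^s)_i=b+s\delta_i\). Then \(f_i'(s)=\delta_i\psi'(b+s\delta_i)\) and \(f_i''(s)=\delta_i^2\psi''(b+s\delta_i)\). The key observation is that \(\sum_i\delta_i=0\), so
\[
\sum_i f_i'(0)=\psi'(b)\sum_i\delta_i=0 .
\]
Hence
\[
\sum_i f_i'(s)=\int_0^s\sum_i\delta_i^2\psi''(b+u\delta_i)\,du,
\]
and therefore
\[
S(\mu)-S(\mu^\lambda)=\int_\lambda^1\sum_i f_i'(s)\,ds .
\]
Changing the sign gives \eqref{eq:exact-S-barycentric-identity}.

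The same computation with \(\eta_\psi\) in place of \(\psi\) handles the aggregation part of \(\Theta_\psi\). Again \(\sum_i\delta_i\eta_\psi'(b)=0\), so \(\sum_i\eta_\psi(\mu_i^\lambda)-\sum_i\eta_\psi(\mu_i)\) equals the double integral with \(-\eta_\psi''\). Only \(\eta_\psi\in C^2\) is needed here, and that is assumed.

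For the coordinate part, we first note that the largest coordinate is preserved along the segment. Since \((\mu^s)_i=b+s\delta_i\) with \(s\ge0\), an index attaining \(\mu_{(1)}\) also attains \((\mu^s)_{(1)}\), so \((\mu^s)_{(1)}=b+s\delta_*\) for all \(s\in[0,1]\). This is the only place the minimum in Lemma \ref{lem:largest-coordinate-active} enters, and it requires \(s\ge 0\), which holds on \([\lambda,1]\). Then
\[
\psi'(b+\lambda\delta_*)-\psi'(b+\delta_*)=\int_\lambda^1\delta_*\bigl[-\psi''(b+s\delta_*)\bigr]\,ds .
\]
Adding this to the aggregation identity and using Definition \ref{def:minimal-constraint-function} gives \eqref{eq:exact-Theta-barycentric-identity}.

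For the final claim, concavity of \(\psi\) and \(\eta_\psi\) makes \(-\psi''\ge0\) and \(-\eta_\psi''\ge0\). Moreover \(\delta_*\ge0\), because the maximum coordinate is at least the average \(b\). So every integrand is nonnegative, and since \(\lambda\le1\) all integrals are nonnegative. The case \(\lambda=0\) is included, with \(\mu^0=\bar e\).

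The only delicate step is justifying that the active index is constant along the segment, so that \(\Theta_\psi(\mu^s)\) is a genuinely \(C^1\) function of \(s\) given by a single branch. This follows from the ordering observation above, so no nondifferentiability of the minimum arises.
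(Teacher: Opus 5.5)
Your proposal is correct and follows the paper's proof essentially verbatim. The paper also sets $F_f(s)=\sum_i f(b+s\delta_i)$ and uses $\sum_i\delta_i=0$ to get $F_f'(0)=0$. It then integrates twice for $f=\psi$ and $f=\eta_\psi$, and handles the largest-coordinate term by one application of the fundamental theorem of calculus to $s\mapsto\psi'(b+s\delta_*)$. Your explicit checks that $(\mu^\lambda)_{(1)}=b+\lambda\delta_*$ for $\lambda\ge0$, and that $\delta_*\ge0$ (needed for the sign of the last integrand), are left implicit in the paper, so they are useful additions rather than deviations.
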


\begin{proof}
	See Appendix \ref{app:proof-barycentric-difference-identities}.
\end{proof}

\begin{theorem}[Recovery theorem: second-derivative domination and barycentric threshold deficit]
	\label{thm:curvature-domination-barycentric}
	Let \(n\ge2\).  Let \(\psi\) and \(\eta_\psi\) be \(C^2\) on \((0,1)\), and suppose that
	\(\psi\) is concave.  Assume the following second-derivative bounds: for
	some \(K_\eta,\kappa_\psi\ge0\),
	\[
	\eta_\psi''(t)\le K_\eta,
	\qquad
	-\psi''(t)\ge\kappa_\psi,
	\qquad t\in(0,1).
	\]
	Then, for every \(\mu\in\Delta^{(n)}\) and \(\lambda\in[0,1]\),
	\begin{equation}\label{eq:S-barycentric-lower-bound}
		S(\mu^\lambda)-S(\mu)
		\ge
		\frac{\kappa_\psi}{2}(1-\lambda^2)\|\mu-\bar e\|^2,
	\end{equation}
	and
	\begin{equation}\label{eq:curvature-domination-estimate}
		\Theta_\psi(\mu^\lambda)-\Theta_\psi(\mu)
		\ge
		\left(\kappa_\psi-\frac{1+\lambda}{2}K_\eta\right)
		(1-\lambda)
		\left(\mu_{(1)}-\frac1n\right).
	\end{equation}
	Consequently,
	\begin{equation}\label{eq:deficit-pointwise-bound}
		d_\psi(\mu,\lambda)
		\le
		(1-\lambda)
		\left[
		\frac{1+\lambda}{2}K_\eta-\kappa_\psi
		\right]_+
		\left(\mu_{(1)}-\frac1n\right),
	\end{equation}
	and
	\begin{equation}\label{eq:deficit-global-bound}
		\mathfrak D_\psi
		\le
		\left(1-\frac1n\right)
		\sup_{0\le\lambda\le1}
		(1-\lambda)
		\left[
		\frac{1+\lambda}{2}K_\eta-\kappa_\psi
		\right]_+.
	\end{equation}
	If \(K_\eta>0\), then
	\[
	\mathfrak D_\psi
	\le
	\left(1-\frac1n\right)
	\frac{(K_\eta-\kappa_\psi)_+^2}{2K_\eta},
	\]
	while if \(K_\eta=0\), then \(\mathfrak D_\psi=0\).  In particular, if
	\(\kappa_\psi\ge K_\eta\), then every nonempty \(\mathcal L_T\) is
	star-shaped with respect to \(\bar e\).
	
	Moreover, if \(-\eta_\psi''(t)\ge\kappa_\eta\ge0\) on \((0,1)\), then
	\begin{equation}\label{eq:Theta-quantitative-lower-bound}
		\Theta_\psi(\mu^\lambda)-\Theta_\psi(\mu)
		\ge
		\frac{\kappa_\eta}{2}(1-\lambda^2)\|\mu-\bar e\|^2
		+
		\kappa_\psi(1-\lambda)\left(\mu_{(1)}-\frac1n\right).
	\end{equation}
\end{theorem}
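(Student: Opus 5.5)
The plan is to derive every estimate from the exact identities of Lemma \ref{lem:barycentric-difference-identities}, bounding the integrands pointwise by the assumed second-derivative bounds. The single elementary integral needed is
\[
\int_\lambda^1\int_0^s du\,ds=\int_\lambda^1 s\,ds=\frac{1-\lambda^2}{2}.
\]

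\emph{The bound for \(S\).} In \eqref{eq:exact-S-barycentric-identity} the integrand satisfies \(-\psi''\ge\kappa_\psi\). Hence it is at least \(\kappa_\psi\sum_i\delta_i^2=\kappa_\psi\|\mu-\bar e\|^2\). Integrating gives \eqref{eq:S-barycentric-lower-bound}.

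\emph{The bound for \(\Theta_\psi\).} In \eqref{eq:exact-Theta-barycentric-identity} I bound the two integrals separately. Since \(-\eta_\psi''\ge -K_\eta\), the double integral is at least
\[
-K_\eta\frac{1-\lambda^2}{2}\sum_i\delta_i^2 .
\]
Since \(\delta_*\ge0\) and \(-\psi''\ge\kappa_\psi\), the single integral is at least \(\kappa_\psi(1-\lambda)\delta_*\).

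The key step, and the only non-routine one, is to compare \(\sum_i\delta_i^2\) with \(\delta_*\). I would show
\[
\sum_i\delta_i^2\le\delta_*
\]
as follows. Because \(\sum_i\delta_i=0\), we have \(\sum_i\delta_i^2=\sum_i\delta_i(\mu_i-b)=\sum_i\delta_i\mu_i\). This is a \(\mu\)-weighted average of the \(\delta_i\), since \(\mu_i>0\) and \(\sum_i\mu_i=1\). It is therefore at most \(\max_i\delta_i=\delta_*\).

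Substituting this and writing \(1-\lambda^2=(1-\lambda)(1+\lambda)\) gives \eqref{eq:curvature-domination-estimate}. If one tried instead to compare through the Euclidean norm, the quadratic and linear scales would not match; this weighted-average trick is what makes them match.

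\emph{The deficit bounds.} Since \(\min\{S(\mu),\Theta_\psi(\mu)\}\le\Theta_\psi(\mu)\), we get
\[
d_\psi(\mu,\lambda)\le[\Theta_\psi(\mu)-\Theta_\psi(\mu^\lambda)]_+ .
\]
Combining this with \eqref{eq:curvature-domination-estimate} yields \eqref{eq:deficit-pointwise-bound}. Using \(\mu_{(1)}-1/n<1-1/n\) on the open simplex and taking the supremum gives \(\mathfrak D_\psi\) bounded by the expression in \eqref{eq:deficit-global-bound}.

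The explicit value comes from maximizing \(f(\lambda)=(1-\lambda)\bigl(\tfrac{1+\lambda}{2}K_\eta-\kappa_\psi\bigr)\) over \([0,1]\).

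\begin{itemize}
\item If \(\kappa_\psi\ge K_\eta\), then \(\tfrac{1+\lambda}{2}K_\eta\le K_\eta\le\kappa_\psi\), so the positive part vanishes for all \(\lambda\) and \(\mathfrak D_\psi=0\). The same holds when \(K_\eta=0\), since then the bracket is \(-\kappa_\psi\le0\).
\item If \(0\le\kappa_\psi<K_\eta\), then \(f'(\lambda)=\kappa_\psi-\lambda K_\eta\), so the critical point is \(\lambda=\kappa_\psi/K_\eta\in[0,1)\). Its value is \((K_\eta-\kappa_\psi)^2/(2K_\eta)\). The bracket is positive there, and \(f\le 0\) wherever the bracket is nonpositive, so this value is the supremum of the positive part.
\end{itemize}

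Star-shapedness for \(\kappa_\psi\ge K_\eta\) then follows from Corollary \ref{cor:deficit-index}.

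\emph{The final estimate.} If \(-\eta_\psi''\ge\kappa_\eta\), the double integral in \eqref{eq:exact-Theta-barycentric-identity} is at least \(\kappa_\eta\frac{1-\lambda^2}{2}\|\mu-\bar e\|^2\). Adding the same lower bound for the single integral gives \eqref{eq:Theta-quantitative-lower-bound}.
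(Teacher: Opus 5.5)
Your proof is correct and follows essentially the same route as the paper: bound the integrands in the barycentric difference identities using the assumed second-derivative bounds, compare $\sum_i\delta_i^2$ with $\delta_*$, and then maximize the scalar quadratic in $\lambda$. Your weighted-average step $\sum_i\delta_i^2=\sum_i\mu_i\delta_i\le\delta_*$ is the same inequality as the paper's $\sum_i\mu_i^2-\frac1n\le\mu_{(1)}-\frac1n$. Concluding star-shapedness from $\mathfrak D_\psi=0$ via Corollary~\ref{cor:deficit-index} is equivalent to the paper's use of $\Theta_\psi(\mu^\lambda)\ge\Theta_\psi(\mu)$ together with the threshold criterion.
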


\begin{proof}
	Set \(b=1/n\), \(\delta_i=\mu_i-b\), and
	\(\delta_*=\mu_{(1)}-1/n\).  By Lemma
	\ref{lem:barycentric-difference-identities} and
	\(-\psi''\ge\kappa_\psi\),
	\[
	S(\mu^\lambda)-S(\mu)
	\ge
	\kappa_\psi
	\int_\lambda^1\int_0^s\sum_i\delta_i^2\,du\,ds
	=
	\frac{\kappa_\psi}{2}(1-\lambda^2)\|\mu-\bar e\|^2,
	\]
	which proves \eqref{eq:S-barycentric-lower-bound}.

	The same lemma, together with
	\(\eta_\psi''\le K_\eta\) and \(-\psi''\ge\kappa_\psi\), gives
	\[
	\Theta_\psi(\mu^\lambda)-\Theta_\psi(\mu)
	\ge
	-\frac{K_\eta}{2}(1-\lambda^2)\|\mu-\bar e\|^2
	+
	\kappa_\psi(1-\lambda)\delta_* .
	\]
	Using
	\[
	\|\mu-\bar e\|^2
	=\sum_i\mu_i^2-\frac1n
	\le \mu_{(1)}-\frac1n
	=\delta_*,
	\]
	and \(1-\lambda^2=(1-\lambda)(1+\lambda)\), we obtain
	\eqref{eq:curvature-domination-estimate}.

	Moreover,
	\[
	d_\psi(\mu,\lambda)
	\le[\Theta_\psi(\mu)-\Theta_\psi(\mu^\lambda)]_+,
	\]
	so \eqref{eq:deficit-pointwise-bound} follows from
	\eqref{eq:curvature-domination-estimate}.  Taking the supremum and using
	\(0\le\mu_{(1)}-1/n\le1-1/n\) gives
	\eqref{eq:deficit-global-bound}.  The remaining scalar supremum is
	\[
	\sup_{0\le\lambda\le1}(1-\lambda)
	\left[\frac{1+\lambda}{2}K_\eta-\kappa_\psi\right]_+
	=
	\begin{cases}
	0, & K_\eta=0,\\[2mm]
	\dfrac{(K_\eta-\kappa_\psi)_+^2}{2K_\eta}, & K_\eta>0,
	\end{cases}
	\]
	where the nonzero case follows by maximizing the quadratic
	\((1-\lambda)((1+\lambda)K_\eta/2-\kappa_\psi)\) at
	\(\lambda=\kappa_\psi/K_\eta\) when \(K_\eta>\kappa_\psi\).

	If \(\kappa_\psi\ge K_\eta\), then the coefficient in
	\eqref{eq:curvature-domination-estimate} is nonnegative for all
	\(\lambda\), so \(\Theta_\psi(\mu^\lambda)\ge\Theta_\psi(\mu)\).  The
	barycentric threshold criterion then gives star-shapedness of every
	nonempty \(\mathcal L_T\).

	Finally, if \(-\eta_\psi''\ge\kappa_\eta\), applying this bound and
	\(-\psi''\ge\kappa_\psi\) directly in
	\eqref{eq:exact-Theta-barycentric-identity} yields
	\eqref{eq:Theta-quantitative-lower-bound}.
\end{proof}
\begin{remark}[Beyond concavity of \(\eta_\psi\)]
	The case \(\eta_\psi''\le0\) corresponds to \(K_\eta=0\).  The preceding
	theorem also allows \(\eta_\psi\) to have positive second derivative,
	provided that the resulting loss in the aggregation term is dominated by
	the concavity of \(\psi\) in the largest-coordinate term.
\end{remark}

\begin{corollary}[Concave power family: quantitative barycentric inequality]
	\label{cor:quantitative-power-barycentric}
	Let \(n\ge2\), \(0<r<1\), and \(\psi_r(t)=t^r\).  Then
	\[
	\Theta_{\psi_r}(\mu^\lambda)-\Theta_{\psi_r}(\mu)
	\ge
	\frac{r(1-r)^2}{2}(1-\lambda^2)\|\mu-\bar e\|^2
	+
	r(1-r)(1-\lambda)\left(\mu_{(1)}-\frac1n\right),
	\]
	where
	\[
	\Theta_{\psi_r}(\mu)=(1-r)\sum_{i=1}^n\mu_i^r+r\mu_{(1)}^{r-1}.
	\]
	Consequently, for every shift, the corresponding nonempty long-only region
	is star-shaped with respect to \(\bar e\).
\end{corollary}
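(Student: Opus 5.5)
We compute \(\eta_{\psi_r}\) and \(\Theta_{\psi_r}\) explicitly, and then read off the lower bound from the barycentric difference identity in Lemma \ref{lem:barycentric-difference-identities}.

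For \(\psi_r(t)=t^r\) we have \(\psi_r'(t)=rt^{r-1}\) and \(\psi_r''(t)=-r(1-r)t^{r-2}<0\), so \(\psi_r\) is concave and \(C^2\) on \((0,1)\). Moreover
\[
\eta_{\psi_r}(t)=t^r-rt^r=(1-r)t^r,
\qquad
\eta_{\psi_r}''(t)=-r(1-r)^2t^{r-2}.
\]
Lemma \ref{lem:largest-coordinate-active} then gives the stated formula \(\Theta_{\psi_r}(\mu)=(1-r)\sum_i\mu_i^r+r\mu_{(1)}^{r-1}\).

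Next we extract uniform curvature constants on \((0,1)\). Since \(r-2<0\) and \(0<t<1\), we have \(t^{r-2}\ge1\). Hence
\[
-\psi_r''(t)\ge r(1-r)=:\kappa_\psi,
\qquad
-\eta_{\psi_r}''(t)\ge r(1-r)^2=:\kappa_\eta .
\]
The quantitative bound \eqref{eq:Theta-quantitative-lower-bound} of Theorem \ref{thm:curvature-domination-barycentric} now yields the displayed inequality verbatim. Only its last part is needed, which rests on \eqref{eq:exact-Theta-barycentric-identity}. This is legitimate because that theorem's hypotheses \(\eta''\le K_\eta\) and \(-\psi''\ge\kappa_\psi\) hold with \(K_\eta=0\), and the two integrands in the identity are bounded below pointwise.

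One point needs checking: the integrands are evaluated at \(b+u\delta_i\) and \(b+s\delta_*\), and these must stay in \((0,1)\). They do, because they are coordinates of \(\mu^u\) and \(\mu^s\), which lie in the open simplex for \(u,s\in[0,1]\). The integral computation
\[
\int_\lambda^1\int_0^s du\,ds=\frac{1-\lambda^2}{2}
\]
gives the factor in the first term. Since \(\delta_*\ge0\), the bound \(-\psi''\ge\kappa_\psi\) applies directly to the second term.

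For the consequence, the right-hand side is nonnegative, so \(\Theta_{\psi_r}(\mu^\lambda)\ge\Theta_{\psi_r}(\mu)\ge\min\{S(\mu),\Theta_{\psi_r}(\mu)\}\). This is \eqref{eq:sharp-threshold-inequality}, and Theorem \ref{thm:sharp-barycentric-threshold} gives star-shapedness of every nonempty \(\mathcal L_T\). Alternatively, concavity of \(\eta_{\psi_r}\) lets us invoke Theorem \ref{thm:star-shaped-concave-eta} directly. No step is a real obstacle; the only care needed is the direction of the inequality \(t^{r-2}\ge1\), which uses \(t<1\) and \(r<2\).
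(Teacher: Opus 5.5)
Your proposal is correct and matches the paper's proof: the paper likewise computes \(\eta_{\psi_r}(t)=(1-r)t^r\) and uses \(t^{r-2}\ge1\) on \((0,1)\) to take \(\kappa_\psi=r(1-r)\), \(\kappa_\eta=r(1-r)^2\), \(K_\eta=0\), then applies Theorem~\ref{thm:curvature-domination-barycentric}. Your extra checks (that the evaluation points stay in \((0,1)\), and the explicit route to star-shapedness via Theorem~\ref{thm:sharp-barycentric-threshold} or Theorem~\ref{thm:star-shaped-concave-eta}) are valid elaborations of the same argument rather than a different one.
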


\begin{proof}
	Here \(\eta_{\psi_r}(t)=(1-r)t^r\), and
	\[
	-\psi_r''(t)=r(1-r)t^{r-2}\ge r(1-r),
	\qquad
	-\eta_{\psi_r}''(t)=r(1-r)^2t^{r-2}\ge r(1-r)^2.
	\]
	Moreover, \(\eta_{\psi_r}''(t)\le0\).  Therefore Theorem
	\ref{thm:curvature-domination-barycentric}, applied with
	\[
	\kappa_\psi=r(1-r),
	\qquad
	\kappa_\eta=r(1-r)^2,
	\qquad
	K_\eta=0,
	\]
	gives the displayed inequality and shows that every nonempty shifted
	long-only region is star-shaped with respect to \(\bar e\).
\end{proof}

\subsection{Nonconvex but barycentrically star-shaped regions}
\label{subsec:nonconvex-but-star-shaped}

The preceding theorem gives sufficient conditions for barycentric
star-shapedness.  Combining it with the midpoint nonconvexity mechanism from
Section \ref{sec:intuition} shows that convexity and barycentric
star-shapedness are distinct requirements. Thus the failure of convexity does not by itself imply the failure of
center-directed stability.

\begin{corollary}[Separation consequence: nonconvex but barycentrically star-shaped regions]
	\label{cor:nonconvex-but-star-shaped}
	Let \(n\ge3\), let \(\psi\in C^3(0,1)\) be concave, and set \(b=1/n\).
	Assume that
	\[
	\psi''(b)<0,
	\qquad
	\eta_\psi''(b)>0,
	\]
	and that there exists \(K\ge0\) such that
	\[
	\eta_\psi''(t)\le K\le -\psi''(t),
	\qquad t\in(0,1).
	\]
	Then, for all sufficiently small \(\varepsilon>0\), there is a nonempty
	threshold interval \(\mathcal I_\varepsilon\) such that every
	\(\mathcal L_T\), \(T\in\mathcal I_\varepsilon\), is nonconvex, while every
	nonempty \(\mathcal L_T\) is star-shaped with respect to \(\bar e\).
\end{corollary}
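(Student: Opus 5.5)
The corollary follows by combining two earlier results that apply to the same \(\psi\): Theorem \ref{thm:local-jensen-nonconvexity} produces the nonconvex thresholds, and Theorem \ref{thm:curvature-domination-barycentric} gives star-shapedness for every threshold.

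\emph{Nonconvexity.} The hypotheses \(n\ge3\), \(\psi\in C^3(0,1)\) concave, \(\psi''(b)<0\) and \(\eta_\psi''(b)>0\) are exactly those of Theorem \ref{thm:local-jensen-nonconvexity}. Lemma \ref{lem:shift-interval} then shows that, for all sufficiently small \(\varepsilon>0\), the interval
\[
\mathcal I_\varepsilon=(S_2(m_\varepsilon),C_\varepsilon)
\]
is nonempty; its length is \(J_{\eta_\psi}(d_\varepsilon,3\varepsilon/2)>0\). For every \(T\in\mathcal I_\varepsilon\), Theorem \ref{thm:local-jensen-nonconvexity} gives \(p_\varepsilon,q_\varepsilon\in\mathcal L_T\) while their midpoint \(m_\varepsilon\) is not in \(\mathcal L_T\). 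So each such \(\mathcal L_T\) is nonempty and nonconvex.

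\emph{Star-shapedness.} Set \(K_\eta=\kappa_\psi=K\). The assumption \(\eta_\psi''(t)\le K\le-\psi''(t)\) on \((0,1)\) then gives both bounds required in Theorem \ref{thm:curvature-domination-barycentric}, with \(K\ge0\). The regularity hypotheses also hold: \(\psi\in C^3\) implies \(\eta_\psi=\psi-t\psi'\in C^2\), and \(\psi\) is concave. Since \(\kappa_\psi\ge K_\eta\), the final clause of that theorem shows that every nonempty \(\mathcal L_T\) is star-shaped with respect to \(\bar e\). Equivalently, estimate \eqref{eq:curvature-domination-estimate} yields \(\Theta_\psi(\mu^\lambda)\ge\Theta_\psi(\mu)\), and criterion \eqref{eq:sharp-threshold-inequality} of Theorem \ref{thm:sharp-barycentric-threshold} applies.

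The only real check is that the two hypothesis sets are consistent, so that the corollary is not vacuous. They are: \(\eta_\psi''(b)>0\) forces \(K>0\), and \(K\le-\psi''(b)\) makes \(\psi''(b)<0\) automatic, so nothing conflicts. Beyond that, the argument is just taking both conclusions for the same \(\psi\).
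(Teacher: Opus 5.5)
Your proposal is correct and follows the paper's proof: you get nonconvexity on \(\mathcal I_\varepsilon\) from Theorem \ref{thm:local-jensen-nonconvexity}, and uniform star-shapedness from Theorem \ref{thm:curvature-domination-barycentric} with \(K_\eta=\kappa_\psi=K\), which gives \(\mathfrak D_\psi=0\); your check that \(\psi\in C^3\) gives \(\eta_\psi\in C^2\) is a detail the paper leaves implicit. One small correction: your last paragraph shows that \(\psi''(b)<0\) follows from the other hypotheses, not that the hypotheses can hold together; an example such as \(\psi(t)=-t^2\) with \(K=2\) (the paper's quadratic remark) is what shows the corollary is not vacuous.
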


\begin{proof}
	The nonconvexity interval is given by Theorem
	\ref{thm:local-jensen-nonconvexity}.  The uniform star-shapedness follows
	from Theorem \ref{thm:curvature-domination-barycentric} with
	\(K_\eta=\kappa_\psi=K\), which gives \(\mathfrak D_\psi=0\).
\end{proof}

\begin{remark}[The quadratic family]
	For \(\psi(t)=-t^2\), one has \(\eta_\psi(t)=t^2\),
	\(\psi''=-2\), and \(\eta_\psi''=2\).  Hence the assumptions of
	Corollary \ref{cor:nonconvex-but-star-shaped} hold with \(K=2\).  Together
	with Example \ref{ex:power-family-nonconvex}, this shows that the quadratic
	family can produce nonconvex long-only regions, while every nonempty
	shifted quadratic long-only region is still barycentrically star-shaped.
\end{remark}

\section{The affine aggregation case: entropy and critical power limits}
\label{sec:entropy}
We finally record the affine borderline of the aggregation function.  For
entropy, the aggregation term is constant on the simplex, and the long-only
constraints reduce to coordinate thresholds.  The entropy-normalized power
family then gives a short comparison with the concave and convex aggregation
cases.

\subsection{Shifted entropy and coordinate thresholds}
\label{subsec:shifted-entropy-coordinate-thresholds}

The Shannon entropy
\[
H(\mu):=-\sum_{i=1}^n\mu_i\log\mu_i
\]
is the canonical entropy functional on the probability simplex
\cite{Shannon1948MathematicalTheory,CoverThomas2006ElementsInformationTheory}. Related convexity and diversity viewpoints on entropy may be found in
\cite{Rao1984ConvexityEntropyDiversity}.
In the present setting, entropy is distinguished by the fact that the
aggregation function is affine, and hence the aggregation term is constant on
the simplex.

For \(0<T<\log n\), define
\[
\Phi_{-T}^H(\mu):=H(\mu)-T,
\qquad
D_{\Phi_{-T}^H}:=\{\mu\in\Delta^{(n)}:H(\mu)>T\}.
\]

\begin{proposition}[Shifted entropy generates a convex nonempty long-only region]
	\label{prop:shifted-entropy}
	Let \(n\ge2\) and \(0<T<\log n\).  Then \(\Phi_{-T}^H\) is positive, symmetric
	and concave on the open convex set \(D_{\Phi_{-T}^H}\).  Its long-only region is
	\[
	\mathcal L_T
	=
	D_{\Phi_{-T}^H}\cap\{\mu_i\le e^{-T},\ i=1,\dots,n\}.
	\]
	Hence \(\mathcal L_T\) is nonempty and convex.  Moreover, the generated
	weights are not long-only on all of \(D_{\Phi_{-T}^H}\).
\end{proposition}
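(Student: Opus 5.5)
The plan is to specialize the normal form of Lemma \ref{lem:normal-form} to \(\psi(t)=-t\log t\), so that \(\Phi^H_{-T}=\Phi_B\) with \(B=-T\).

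\textbf{Basic properties and the constraints.} Positivity on \(D_{\Phi^H_{-T}}\) holds by definition. Symmetry is clear. Concavity follows from \(\psi''(t)=-1/t<0\). The set \(D_{\Phi^H_{-T}}\) is an open convex subset of \(\Delta^{(n)}\), being a strict superlevel set of the continuous concave function \(H\). It is nonempty because \(H(\bar e)=\log n>T\).

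Next I compute \(\psi'(t)=-\log t-1\) and
\[
\eta_\psi(t)=-t\log t+t\log t+t=t,
\]
so \(\sum_j\eta_\psi(\mu_j)=1\) on the simplex. This is the affine-aggregation collapse. Then \eqref{eq:Gi-normal-form} gives
\[
G_i(\mu)=-T-\log\mu_i-1+1=-T-\log\mu_i .
\]
Hence \(G_i(\mu)\ge0\) if and only if \(\mu_i\le e^{-T}\). Substituting into \eqref{eq:long-only-region} yields the stated description of \(\mathcal L_T\).

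\textbf{Convexity and nonemptiness.} Convexity follows because \(\mathcal L_T\) is the intersection of the convex set \(D_{\Phi^H_{-T}}\) with half-spaces. For nonemptiness, \(\bar e\) works: \(H(\bar e)=\log n>T\), and \(1/n\le e^{-T}\) since \(T<\log n\).

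\textbf{Failure of long-only on all of \(D_{\Phi^H_{-T}}\).} This is the only step needing care. I need a point with \(H(\mu)>T\) but some \(\mu_1>e^{-T}\). I would take the family
\[
\mu^{(s)}=\Bigl(s,\frac{1-s}{n-1},\dots,\frac{1-s}{n-1}\Bigr),
\]
whose entropy is
\[
h(s)=-s\log s-(1-s)\log\frac{1-s}{n-1}.
\]
The natural choice is \(s=e^{-T}\in(1/n,1)\), where I need to verify \(h(e^{-T})>T\).

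\begin{itemize}
\item Since \(h\) is strictly concave with maximum \(\log n\) at \(s=1/n\), I will not argue by monotonicity. Instead I compute directly.
\item \(h(s)=-s\log s+(1-s)\log(n-1)-(1-s)\log(1-s)\).
\item At \(s=e^{-T}\), the first term equals \(sT\).
\item So \(h(s)-T=(1-s)\bigl[\log(n-1)-\log(1-s)\bigr]-(1-s)T\).
\item Therefore \(h(s)-T=(1-s)\log\frac{(n-1)s}{1-s}\).
\end{itemize}

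This is positive if and only if \((n-1)s>1-s\), that is, \(s>1/n\), which holds. By continuity, \(s\) slightly larger than \(e^{-T}\) still gives \(H(\mu^{(s)})>T\) while \(\mu_1=s>e^{-T}\). So \(\pi_1(\mu^{(s)})<0\) at a point of \(D_{\Phi^H_{-T}}\).
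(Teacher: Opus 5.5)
Your proof is correct and follows the paper's argument essentially step for step. You obtain \(G_i(\mu)=-T-\log\mu_i\) via Lemma \ref{lem:normal-form} with \(\eta_\psi(t)=t\), whereas the paper differentiates \(H\) directly; from there you derive convexity and nonemptiness at \(\bar e\) from the half-space description, and use the same family \(\bigl(s,\frac{1-s}{n-1},\dots,\frac{1-s}{n-1}\bigr)\) with \(H-T=(1-s)\log\frac{(n-1)s}{1-s}>0\) at \(s=e^{-T}\) (valid since \(1/n<e^{-T}<1\)), followed by the same continuity perturbation.
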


\begin{proof}
	See Appendix \ref{app:proof-convex-entropy}.
\end{proof}

\begin{remark}[Entropy as the affine aggregation-function case]
	For \(\psi(t)=-t\log t\), one has \(\psi'(t)=-1-\log t\) and
	\[
	\eta_\psi(t)=\psi(t)-t\psi'(t)=t.
	\]
	Thus \(\sum_j\eta_\psi(\mu_j)=1\) on the simplex, so the aggregation term is
	constant and only shifts the constraints by the same amount for every
	coordinate.  This is why the shifted entropy family reduces to coordinate
	thresholds.
\end{remark}

\subsection{Entropy-normalized power family and critical Jensen limits}
\label{subsec:entropy-normalized-power}

We use a standard affine normalization of power sums, with Shannon entropy
recovered as \(r\to1\)
\cite{Daroczy1970GeneralizedInformationFunctions,Tsallis1988GeneralizationBoltzmannGibbs}.
Here the normalization is useful because the associated aggregation function
passes explicitly through the concave, affine, and convex cases.

\begin{example}[Entropy-normalized power family]
	\label{ex:entropy-normalized-power}
	For \(r>0\), \(r\neq1\), define
	\[
	\psi_r(t)=\frac{t^r-t}{1-r},
	\qquad 0<t<1,
	\]
	and set \(\psi_1(t)=-t\log t\).  Then
	\[
	\lim_{r\to1}\psi_r(t)=\psi_1(t),
	\qquad
	\psi_r''(t)=-rt^{r-2}<0,
	\]
	and
	\[
	\eta_{\psi_r}(t):=\psi_r(t)-t\psi_r'(t)=t^r,
	\]
	with the convention \(\eta_{\psi_1}(t)=t\).  Consequently,
	\(\eta_{\psi_r}\) is concave for \(0<r<1\), affine at \(r=1\), and convex
	for \(r>1\).
\end{example}

\begin{proof}
	The limit follows from l'Hopital's rule.  For \(r\neq1\),
	\[
	\psi_r'(t)=\frac{rt^{r-1}-1}{1-r},
	\qquad
	\psi_r''(t)=-rt^{r-2},
	\qquad
	\psi_r(t)-t\psi_r'(t)=t^r.
	\]
	For \(r=1\), \(\psi_1'(t)=-1-\log t\), hence
	\(\eta_{\psi_1}(t)=t\).  The sign of
	\(\eta_{\psi_r}''(t)=r(r-1)t^{r-2}\) gives the final assertion.
\end{proof}

\begin{remark}[Relation with the unnormalized power families]
	On the simplex, \(\sum_i\psi_r(\mu_i)\) differs by an additive constant and a
	positive factor from \(\sum_i\mu_i^r\) when \(0<r<1\), and from
	\(-\sum_i\mu_i^r\) when \(r>1\).  Thus this normalization preserves the
	power-family mechanisms while making the entropy limit explicit.
\end{remark}

\section{Conclusion and open problems}
\label{sec:conclusion-open-problems}

We have studied a family of long-only preimage regions on the probability
simplex arising from symmetric separable generated maps.  The main point is that the
concavity of the underlying generating function does not by itself determine
the convexity or barycentric star-shapedness of the long-only preimage region.  In the
symmetric separable case, the defining inequalities split into a coordinate
term and an aggregation term, and this decomposition controls the geometry.

Positive second-derivative behavior of the aggregation function can produce
nonconvexity, while concavity of the aggregation function, or more generally a
second-derivative domination condition, gives barycentric star-shapedness.
Entropy is the affine limiting case in which the aggregation term is constant
and the constraints reduce to coordinate thresholds.

Two directions remain natural.  First, one can develop a boundary theory for
these long-only preimage regions, classifying tangent directions that enter, remain
tangent to, or exit the feasible set.  Second, one can seek analogous
mechanisms beyond the symmetric separable class, where the
coordinate--aggregation separation is no longer available in this form.

\appendix
\section{Proof of the endpoint constraints dominate the midpoint constraint lemma}
\label{app:proof-of-endpoint-constraints-dominate-the-midpoint-constraint}
\begin{proof}
	By Lemma \ref{lem:jensen-difference-general} and Taylor expansion at \(b\),
	\begin{equation}\label{eq:S2-midpoint-jensen-expansion}
		S_2(p_\varepsilon)-S_2(m_\varepsilon)
		=
		J_{\eta_\psi}\left(d_\varepsilon,\frac{3\varepsilon}{2}\right)
		=
		\frac94\eta_\psi''(b)\varepsilon^2+o(\varepsilon^2)>0.
	\end{equation}
	Hence \(S_1(p_\varepsilon)=S_2(p_\varepsilon)>S_2(m_\varepsilon)\).  For the
	remaining constraints at \(p_\varepsilon\), Taylor expansion at \(b\) gives
	\[
	S_3(p_\varepsilon)-S_2(m_\varepsilon)
	=-3\psi''(b)\varepsilon+O(\varepsilon^2)>0,
	\]
	and, for \(\ell\ge4\),
	\[
	S_\ell(p_\varepsilon)-S_2(m_\varepsilon)
	=-\psi''(b)\varepsilon+O(\varepsilon^2)>0.
	\]
	The same estimates apply to \(q_\varepsilon\) by permutation symmetry.
\end{proof}

\section{Proof of the nonempty threshold interval lemma}
\label{app:proof-of-nonempty-threshold-interval-lemma}

\begin{proof}
	Let \(S_0=n\psi(b)\).  We first identify the smallest endpoint
	constraints.  At \(p_\varepsilon\), the first two coordinates are equal to
	\(a_\varepsilon\), so
	\[
	S_1(p_\varepsilon)=S_2(p_\varepsilon).
	\]
	For the third coordinate,
	\[
	S_2(p_\varepsilon)-S_3(p_\varepsilon)
	=
	\psi'(a_\varepsilon)-\psi'(c_\varepsilon)
	=
	3\psi''(b)\varepsilon+O(\varepsilon^2)<0,
	\]
	and for \(\ell\ge4\),
	\[
	S_2(p_\varepsilon)-S_\ell(p_\varepsilon)
	=
	\psi'(a_\varepsilon)-\psi'(b)
	=
	\psi''(b)\varepsilon+O(\varepsilon^2)<0.
	\]
	Thus
	\[
	\min_iS_i(p_\varepsilon)=S_1(p_\varepsilon)=S_2(p_\varepsilon).
	\]
	By symmetry,
	\[
	\min\left\{\min_iS_i(p_\varepsilon),\min_iS_i(q_\varepsilon)\right\}
	=
	S_1(p_\varepsilon)=S_2(p_\varepsilon)
	=
	S_2(q_\varepsilon)=S_3(q_\varepsilon).
	\]
	
	Next we compare these endpoint constraints with the positive-region values.
	Taylor expansion at \(b\) gives
	\[
	S(p_\varepsilon)=S(q_\varepsilon)
	=
	S_0+3\psi''(b)\varepsilon^2+O(\varepsilon^3),
	\]
	\[
	S(m_\varepsilon)
	=
	S_0+\frac34\psi''(b)\varepsilon^2+O(\varepsilon^3),
	\]
	while
	\[
	S_2(p_\varepsilon)
	=
	S_0+\psi''(b)\varepsilon+O(\varepsilon^2).
	\]
	Since \(\psi''(b)<0\), the value \(S_2(p_\varepsilon)\) lies below
	\(S_0\) by order \(\varepsilon\), whereas the positive-region values differ
	from \(S_0\) only by order \(\varepsilon^2\).  Hence, for sufficiently small
	\(\varepsilon>0\),
	\[
	S(p_\varepsilon),\ S(q_\varepsilon),\ S(m_\varepsilon)
	>
	S_2(p_\varepsilon).
	\]
	Therefore \(C_\varepsilon=S_2(p_\varepsilon)\).
	
	Finally,
	\[
	C_\varepsilon-S_2(m_\varepsilon)
	=
	S_2(p_\varepsilon)-S_2(m_\varepsilon).
	\]
	By \eqref{eq:S2-midpoint-jensen-expansion}, this equals
	\(J_{\eta_\psi}(d_\varepsilon,3\varepsilon/2)\), is positive for all
	sufficiently small \(\varepsilon>0\), and has asymptotic size
	\(\frac94\eta_\psi''(b)\varepsilon^2+o(\varepsilon^2)\).  Thus
	\(\mathcal I_\varepsilon\) is nonempty and has the claimed length.
\end{proof}
\section{Proof of the barycentric difference identities lemma}
\label{app:proof-barycentric-difference-identities}
\begin{proof}
	For a \(C^2\) function \(f\), define
	\(F_f(s):=\sum_{i=1}^nf(b+s\delta_i)\) for \(0\le s\le1\).  Since
	\(\sum_i\delta_i=0\), one has \(F_f'(0)=0\), and
	\[
	F_f''(s)=\sum_i\delta_i^2 f''(b+s\delta_i).
	\]
	Hence
	\[
	F_f(\lambda)-F_f(1)
	=-\int_\lambda^1\int_0^sF_f''(u)\,du\,ds.
	\]
	Taking \(f=\psi\) gives \eqref{eq:exact-S-barycentric-identity}, and taking
	\(f=\eta_\psi\) gives the aggregation part of
	\eqref{eq:exact-Theta-barycentric-identity}.  Finally,
	\[
	\psi'((\mu^\lambda)_{(1)})-\psi'(\mu_{(1)})
	=
	\int_\lambda^1\delta_*[-\psi''(b+s\delta_*)]ds,
	\]
	which yields \eqref{eq:exact-Theta-barycentric-identity}.  Concavity makes
	all integrands nonnegative.
\end{proof}
\section{Proof of the convex long-only region with respect to entropy function}
\label{app:proof-convex-entropy}

\begin{proof}
	The set \(D_{\Phi_{-T}^H}\) is a strict superlevel set of the concave function
	\(H\), hence is open and convex.  Since
	\[
	\partial_iH(\mu)=-1-\log\mu_i,
	\qquad
	\sum_j\mu_j\partial_jH(\mu)=-1+H(\mu),
	\]
	we obtain
	\[
	G_i(\mu)
	=
	\Phi_{-T}^H(\mu)+\partial_i\Phi_{-T}^H(\mu)-\sum_j\mu_j\partial_j\Phi_{-T}^H(\mu)
	=
	-\log\mu_i-T.
	\]
	Thus \(G_i(\mu)\ge0\) is equivalent to \(\mu_i\le e^{-T}\), giving the stated
	formula for \(\mathcal L_T\).  Convexity follows from the convexity of
	\(D_{\Phi_{-T}^H}\) and the coordinate half-space constraints.  Nonemptiness
	follows from \(H(\bar e)=\log n>T\) and \(1/n<e^{-T}\).
	
	To see that the full positive region is not long-only, let
	\[
	\nu(t)=\left(t,\frac{1-t}{n-1},\dots,\frac{1-t}{n-1}\right).
	\]
	At \(t_0=e^{-T}\),
	\[
	H(\nu(t_0))+\log t_0
	=
	-(1-t_0)\log\frac{1-t_0}{(n-1)t_0}>0,
	\]
	so \(H(\nu(t_0))>T\).  Hence, for \(t>t_0\) sufficiently close to \(t_0\), we
	still have \(\nu(t)\in D_{\Phi_{-T}^H}\), but
	\(G_1(\nu(t))=-\log t-T<0\).  Thus short selling occurs somewhere in
	\(D_{\Phi_{-T}^H}\).
\end{proof}
\newpage
\bibliographystyle{plain}
\bibliography{refs}

\end{document}